\newcommand{\intav}[1]{\mathchoice {\mathop{\vrule width 6pt height 3 pt depth  -2.5pt
\kern -8pt \intop}\nolimits_{\kern -6pt#1}} {\mathop{\vrule width
5pt height 3  pt depth -2.6pt \kern -6pt \intop}\nolimits_{#1}}
{\mathop{\vrule width 5pt height 3 pt depth -2.6pt \kern -6pt
\intop}\nolimits_{#1}} {\mathop{\vrule width 5pt height 3 pt depth
-2.6pt \kern -6pt \intop}\nolimits_{#1}}}
\def\today{\ifcase\month\or
  January\or February\or March\or April\or May\or June\or=
  July\or August\or September\or October\or November\or December\fi
  \space\number\day, \number\year}
 \newtheorem{theorem}{Theorem}
  \newtheorem{conjecture}{Conjecture}
 \newtheorem{lemma}[theorem]{Lemma}
 \newtheorem{proposition}[theorem]{Proposition}
 \theoremstyle{definition}
 \theoremstyle{remark}
 \newtheorem{remark}[theorem]{Remark}
 \newcommand{\R}{\mathbb{R}}
 \newcommand{\Z}{\mathbb{Z}}
\newcommand{\wt}{\widetilde}
\newcommand{\var}{{\rm Var\,}}
\begin{document}

\title[Sharp inequalities for maximal operators on finite graphs,2]{Sharp inequalities for maximal operators on finite graphs, II}
\author[Gonz\'{a}lez-Riquelme and Madrid]{Cristian Gonz\'{a}lez-Riquelme and Jos\'e Madrid}
\date{\today}
\subjclass[2010]{26A45, 42B25, 39A12, 46E35, 46E39, 05C12.}
\keywords{Maximal operators; finite graphs; p-bounded variation; sharp constants.}

\address{IMPA - Instituto de Matem\'{a}tica Pura e Aplicada\\
Rio de Janeiro - RJ, Brazil, 22460-320.}
\email{cristian@impa.br}

\address{Department of  Mathematics,  University  of  California,  Los  Angeles (UCLA),  Portola Plaza 520, Los  Angeles,
California, 90095, USA}
\email{jmadrid@math.ucla.edu}

\allowdisplaybreaks
\numberwithin{equation}{section}

\maketitle
\begin{abstract}
Let $M_{G}$ be the centered Hardy-Littlewood maximal operator on a finite graph $G$. We find $\underset{p\to \infty}{\lim}\|M_{G}\|_{p}^{p
}$ when $G$ is the start graph ($S_n$) and the complete graph ($K_n$), and we fully describe $\|M_{S_n}\|_{p}$ and the corresponding extremizers for $p\in (1,2)$. We prove that $\underset{p\to \infty}{\lim}\|M_{S_n}\|_{p}^{p
}=\frac{1+\sqrt{n}}{2}$ when $n\ge 25$. Also, we compute the best constant ${\bf C}_{S_n,2}$ such that for every $f:V\to \mathbb{R}$ we have $\var_{2}M_{S_n}f\le {\bf C}_{S_n,2}\var_{2}f$. We prove that ${\bf C}_{S_n,2}=\frac{(n^2-n-1)^{1/2}}{n}$ for all $n\geq 3$ and characterize the extremizers. Moreover, when $M$ is the Hardy-Littlewood maximal operator on $\mathbb{Z}$, we compute the best constant ${\bf C}_{p}$ such that $\var_{p}Mf\le {\bf C}_{p}\|f\|_{p}$ for $p\in (\frac{1}{2},1)$ and we describe the extremizers.  
\end{abstract}

%%%%%------Intro--------%%%%%%
\section{Introduction}
Maximal operators are classical objects in analysis. They have applications in several areas of mathematics %(e.g proving pointwise a.e. convergence of functions)
and have attracted interest since the beginning of the past century. 
In this manuscript we are interest in these operators acting on graphs. Given a locally finite connected graph $G=(V,E)$, endowed with the metric $d_{G}$ induced by the edges, and $f:V\to \mathbb{R}$, we define
$$M_{G}f(v)=\underset{r\ge 0}{\sup}\ \intav{B(v,r)}|f|,$$
where $B(v,r)=\{v_1\in V;d_{G}(v_1,v)\le r\}$ and $\intav{B(v,r)}|f|=\frac{\sum_{u\in B(v,r)}|f(u)|}{\mu(B(v,r))}$. Here $\mu$ is the counting measure. 
\subsection{The $p$-norm of maximal functions on finite graphs}
The $p-$norm (quasi-norm in the range $0<p<1$) of these operators is defined as
$$\|M_{G}\|_{p}:=\underset{\underset{f\neq 0}{f:V\to \mathbb{R}}}{\sup}\frac{\|M_{G}f\|_{p}}{\|f\|_{p}},$$
where $\|g\|_{p}=\left(\displaystyle\sum_{v\in V}|g(v)|^{p}\right)^{\frac{1}{p}},$ for any $g:V\to \mathbb{R}.$

The study of the $p-$norm of maximal operators acting on finite graphs was initiated by Soria and Tradacete in \cite{SORIA}. They established optimal bounds for $\|M_G\|_{p}$ in the range $0<p\leq 1$, and showed that the complete graph $K_n$ and the star graph $S_n$ are the
extremal graphs attaining, respectively, the lower and upper bounds. Since then, the study of maximal operators acting on graphs, specially on the complete $M_{K_n}$ and star graphs $M_{S_n}$ has attracted the attention of many authors. One of the main goals of this manuscript is to try to extend this notions to the range $p>1.$ 

The case $p>1$ presents several difficulties. For instance, in the case $p\le 1$ the concavity of the function $x\mapsto x^p$ was used in \cite[Lemma 2.5]{SORIA} in order to prove that $\|M_{G}\|_p$ is attained by taking some Dirac's delta. That reduction simplifies matters in that case and it is not available for $p>1$. In fact, the known extemizers for $p>1$ are quite different (see Section 2 and 3).

The first progress in this direction was achieved in the authors previous work \cite{GRM}. There, we obtained the precise values of the $2-$norm of $M_{K_n}$ and $M_{S_n}$. Moreover, we found extremizers in these cases. The techniques used in such results do not work for general $p>1$. In Section 2 we address this problem, we fully characterize the extremizers for $\|M_{S_n}\|_p$ for all $p\in (1,2]$. Moreover, we obtain some partial characterization for this objects for all $p>2$, and we obtain a similar result for the extremizers of $\|M_{K_n}\|_p$ in the range $p>1$.\\

For a given $p>1$ and $G$, it could be difficult to determine the value of $\|M_{G}\|_{p}.$ That happens even in the model cases $G=K_n$ and $G=S_n$. However, it was proved by Soria and Tradacete (see Proposition 3.4 in \cite{SORIA}) that 
$$
\left(1+\frac{n-1}{2^p}\right)\leq \|M_{S_n}\|^p_{p}\leq \left(\frac{n+5}{2}\right).
$$
They also presented similar bounds for $\|M_{K_n}\|_{p}^{p}$. We notice that both lower bounds go to $1$ when $p\to \infty.$

In Section 3 we discuss the behavior of both $\|M_{S_n}\|_{p}^{p}$ and $\|M_{K_n}\|_{p}^{p}$. In particular, we prove that $$\inf_{p>0} \|M_{G_n}\|_p^{p}>1,$$ for any graph $G$ of $n$ vertices. This improves qualitatively the aforementioned estimates of Soria and Tradacete.     
Also, in Section 3 we prove that for all $n\geq 25$ we have
$$
\lim_{p\to\infty}\|M_{S_n}\|^p_{p}=\frac{1+\sqrt{n}}{2}.
$$
Moreover, we obtain a similar result for $M_{K_n}$.\\
\subsection{The p-variation of maximal functions}
Given a locally finite connected simple graph $G=(V,E)$ and $p>0$ we define, for every $g:V\to \mathbb{R},$
$$\var_{p}g=\left(\underset{\underset{(v_1,v_2)=e}{e\in E}}{\sum}|g(v_1)-g(v_2)|^{p}\right)^{\frac{1}{p}},$$
where we write $(v_1,v_2)=e$ if the edge $e$ joins $v_1$ and $v_2$.
As in the definition of $\|M_{G}\|_{p}$, it is natural to define $${\bf C}_{G,p}:=\sup_{f:V\to \mathbb{R};\var_{p}f>0}\frac{\var M_{G}f}{\var_{p}f}.$$
Motivated by a large family of regularity results for maximal operators acting on Sobolev spaces (see \cite{CarneiroSurvey}), in \cite{Liu2020} Liu and Xue computed ${\bf C}_{K_n}$ and ${\bf C_{S_n}}$ in the case $n\le 3$. Also, they made some conjectures for these values for $n>3$. These conjectures were largely establish by the authors in \cite{GRM}. There, the authors proved that
$$
{\bf C}_{K_n,p}=1-\frac{1}{n}\ \ \text{for all}\ p\geq \frac{\log 4}{\log 6} \ \ \text{and}\ \  {\bf C}_{S_n,p}=1-\frac{1}{n} \ \ \text{for all}\ \ p\in[1/2,1].
$$
Moreover, in all the previous situations the extremizers are delta functions.
In the case $p>1$, delta functions are not extremizers for the $p-$variation of $M_{S_n}$. In Section 4 we find the precise value of ${{\bf C}_{S_n,2}}$. Moreover, we fully describe the extremizers in this case.\\

The study of the regularity properties of the discrete Hardy-Littlewood maximal operators acting on real valued functions defined on the integers was initiated by Bober, Carneiro, Hughes and Pierce in \cite{BCHP2012}, some other interesting results were later obtained in \cite{CH2012}, \cite{CM2015} and \cite{Madrid2016}. 
In Section 5, we obtain some complementary results extending
\cite[Theorem 1]{Madrid2016} to the range $p\in [\frac{1}{2},1)$. In \cite[Theorem 1]{Madrid2016} the second author proved that 
$$\var_{1}M_{\mathbb{Z}}f\le 2\|f\|_{1},$$
when considering $\mathbb{Z}$ as a graph where consecutive numbers are joined by an edge.
This inequality is sharp. The motivation behind this inequality is to try to get an intuition about which is the optimal constant $C$ in the estimate
$$\var_{1}M_{\mathbb{Z}}f\le C\var_{1}f,$$
that was proved to be true for $C=(2\cdot 120\cdot 2^{12}\cdot 300+4)$ in \cite{temur2017regularity}. Since $2\|f\|_{1}\ge \var_{1}f$ it is believed that $C=1$ is the optimal constant, but this remains an open problem. In Section 5 we find the best constant $C_p$ such that 
$$\var_{p}M_{\mathbb{Z}}f\le C_{p}\|f\|_{p}$$
for $p\in [\frac{1}{2},1].$
This motivates us to make some conjectures. We also establish the analogous optimal result for $p=\infty$.

%%%%%----SECTION 2------%%%%%%%

\section{Extremizers for the $p-$norm of maximal operators on graphs}
In this section we prove the existence of extremizers for the $p$-norm and provide some further properties about these functions.
\begin{proposition}\label{prop: existen extremizers}
Let $G=(V,E)$ be a connected finite graph and $p>0$. We have that there exists $f:V\to \mathbb{R}_{\ge 0}$ such that 
$$\frac{\|M_{G}f\|_{p}}{\|f\|_{p}}=\|M_{G}\|_{p}$$
\end{proposition}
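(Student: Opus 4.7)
The plan is a straightforward compactness argument exploiting the finiteness of $G$. First, I would reduce to nonnegative functions: by the definition of $M_G$ (which uses $|f|$), one has $M_G f = M_G|f|$ and $\||f|\|_p = \|f\|_p$, so for any $f$ the ratio $\|M_G f\|_p/\|f\|_p$ equals the ratio for $|f|\ge 0$. Hence
\[
\|M_G\|_p \;=\; \sup_{\substack{f:V\to\mathbb{R}_{\ge 0}\\ f\not\equiv 0}} \frac{\|M_G f\|_p}{\|f\|_p}.
\]

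Next I would normalize. Since $M_G$ is positively homogeneous of degree $1$ and so is $\|\cdot\|_p$, the ratio is scale-invariant. Setting
\[
S \;:=\; \bigl\{\,f:V\to \mathbb{R}_{\ge 0} \,:\, \|f\|_p = 1 \,\bigr\},
\]
the problem becomes $\|M_G\|_p = \sup_{f\in S} \|M_G f\|_p$. Because $V$ is finite, the space of functions $V\to\mathbb{R}$ is the finite-dimensional Euclidean space $\mathbb{R}^{|V|}$; the set $S$ is bounded (it sits inside the $\ell^p$-unit ball, which for every $p>0$ is a bounded subset of $\mathbb{R}^{|V|}$) and closed (as the preimage of $\{1\}$ under the continuous map $f\mapsto \|f\|_p$, intersected with the closed positive cone). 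Hence $S$ is compact.

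Then I would verify continuity of $f\mapsto \|M_G f\|_p$ on $S$. For each vertex $v\in V$, the supremum in $M_Gf(v)=\sup_{r\ge 0}\intav{B(v,r)}|f|$ is really a maximum over the finitely many distinct balls $B(v,r)$, each of which gives a linear (hence continuous) functional of $f\ge 0$; so $M_Gf(v)$ is a maximum of finitely many continuous functions and is continuous in $f$. Taking the $\ell^p$-(quasi)norm of a finite vector of continuous functions is also continuous. A continuous function on the compact set $S$ attains its supremum, producing the desired extremizer $f_{\ast}\in S$.

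The argument is entirely routine once the reduction to nonnegative, $\ell^p$-normalized functions is noted; there is no serious obstacle since finite-dimensionality eliminates any loss of mass at infinity or failure of weak limits. The only cosmetic care needed concerns the range $p\in (0,1)$, where $\|\cdot\|_p$ is merely a quasi-norm, but it is still continuous and positively homogeneous, so the same compactness/continuity scheme applies verbatim.
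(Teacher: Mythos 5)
Your proposal is correct and follows essentially the same route as the paper: a compactness--continuity argument in the finite-dimensional space of functions on $V$, using that $M_Gf(v)$ is a maximum of finitely many averages and hence continuous, together with scale invariance of the ratio. The only (cosmetic) difference is the normalization — you restrict to the $\ell^p$-unit sphere, while the paper normalizes by $\max_i f(a_i)=1$ and works on the corresponding compact face of the cube $[0,1]^{|V|}$; both choices yield a compact set on which the continuous ratio attains its supremum.
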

\begin{proof}
We write $|V|=n$ and $V=:\{a_1,\dots,a_n\}$. Given $y:=(y_1,\dots,y_n)\in [0,1]^{n}\cap \{\underset{{i=1,\dots,n}}{\max} y_{i}=1\}=:A$ we define $f_y:V\to \mathbb{R}_{\ge 0}$ by $f_y(a_i)=y_i$. We observe that $M_{G}f_y(a_i)$ is continuous with respect to $y$ in $A$ (since is the maximum of continuous functions). Then, the function $\frac{\|M_{G}f_{y}\|_{p}}{\|f_{y}\|_{p}}$ is continuous with respect to $y$ in $A$. Thus it achieves its maximum at a point $y_0\in A$. We claim that $$\frac{\|M_{G}f_{y_0}\|}{\|f_{y_0}\|_{p}}=\|M_{G}\|_{p}.$$ In fact, for every $g:V\to \mathbb{R}_{\ge 0}$ we have that the quantity $$\frac{\|M_{G}g\|_{p}}{\|g\|_{p}}$$ remains unchanged by applying the transformation $$g\mapsto \frac{g}{\max_{i=1,\dots,n}g(a_i)}.$$
This last function is equal to $f_y$ for some $y\in A$, from where we conclude.  

\end{proof}
Our next results intend to characterize 
%Now we provide some additional information about 
the extremizers when $G=K_n$ and $G=S_n$.
\begin{proposition}\label{twovaluescomplete}
Let $K_n=(V,E)$ be the complete graph with $n$ vertices where $V=\{a_1,a_2\dots,a_n\}$ and let $p>1$. If $$\frac{\|M_{K_n}f\|_{p}}{\|f\|_{p}}=\|M_{K_n}\|_{p},$$ then $f$ only takes two values. 
\end{proposition}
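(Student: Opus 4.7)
The plan is to perform a two-step symmetrization on an extremizer, driven by the strict convexity of $t\mapsto t^p$ for $p>1$. After the standard reduction to $f\ge 0$ (the ratio is invariant under $f\mapsto |f|$), the key observation is that on $K_n$ every ball is either a singleton or the whole vertex set, so
\[
M_{K_n}f(v)=\max(f(v),A),\qquad A:=\frac{1}{n}\sum_{u\in V}f(u).
\]
This lets me partition $V=L\sqcup H$ with $L=\{v:f(v)\le A\}$ and $H=\{v:f(v)>A\}$; non-constant $f$ forces both pieces non-empty (a constant $f$ yields ratio $1$, which is not extremal when $p>1$). It suffices to prove that an extremizer is constant on $L$ and constant on $H$.

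Step one: replace $f$ on $L$ by the constant $S_L/|L|$ (with $S_L=\sum_{v\in L}f(v)$) and leave $f|_H$ unchanged. This preserves the mean $A$ and the partition, so $M_{K_n}f$ is unchanged pointwise, while Jensen's inequality applied to the strictly convex function $t\mapsto t^p$ strictly decreases $\|f\|_p^p$ unless $f$ is already constant on $L$. Hence the ratio strictly improves in that case. Step two: now average on $H$, replacing $f$ there by $S_H/|H|$. Again the mean and partition are preserved. The twist is that on $H$ one has $M_{K_n}f=f$, so both $\|M_{K_n}f\|_p^p$ and $\|f\|_p^p$ drop by the \emph{same} amount $\delta\ge 0$ (with $\delta=0$ iff $f$ is constant on $H$). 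Writing $N=\|M_{K_n}f\|_p^p$ and $D=\|f\|_p^p$, the elementary identity
\[
\frac{N-\delta}{D-\delta}-\frac{N}{D}=\frac{\delta(N-D)}{D(D-\delta)},
\]
combined with the strict inequality $N>D$ (which holds whenever $f$ is non-constant, since $M_{K_n}f\ge f$ pointwise), shows that the ratio again strictly improves unless $\delta=0$.

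Any extremizer therefore must satisfy both constancy conditions, so it takes at most two values, and non-constancy forces exactly two. The main subtlety is step two: a naive application of Jensen only lowers the numerator $\|M_{K_n}f\|_p^p$, and the gain for the ratio must instead be extracted from the observation that on $H$ the same quantity is subtracted from numerator and denominator, so the ratio improves precisely because $N>D$.
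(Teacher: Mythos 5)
Your proof is correct and follows essentially the same symmetrization strategy as the paper: split the vertices according to whether $f$ lies below or above the mean, average $f$ on each piece, and use the strict convexity of $t\mapsto t^p$ together with the fact that the extremal ratio exceeds $1$ to force constancy on each piece. The paper carries out both averagings in a single chain of inequalities driven by $\|M_{K_n}\|_p^p-1>0$, whereas you separate the two steps and extract the gain on the high set from the identity $\frac{N-\delta}{D-\delta}-\frac{N}{D}=\frac{\delta(N-D)}{D(D-\delta)}$ with $N>D$; this is the same mechanism in slightly different packaging.
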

\begin{proof}
First, by taking a Dirac's delta is easy to see that $\|M_{K_n}\|_{p}>1.$ Now, assume that $f$ attains the supremum. We have then that $$\sum_{i=1}^{r}f(a_i)^p+(n-r)m^p={\|M_{K_n}\|}^p\left(\sum_{i=1}^n f(a_i)^p\right).$$ Therefore, by H\"older's inequality we have that \begin{align*}
(n-r)m^p&=\left(\|M_{K_n}\|_{p}^p-1\right)\left(\sum_{i=1}^r f(a_i)^p\right)+\|M_{K_n}\|_{p}^p\left(\sum_{i=r+1}^n f(a_i)^p\right)\\
&\ge r(\|M_{K_n}\|_{p}^p-1)\left(\frac{\sum_{i=1}^r f(a_i)}{r}\right)^p+
(n-r)\|M_{K_n}\|_{p}^p\left(\frac{\sum_{i=r+1}^n f(a_i)}{n-r}\right)^p. 
\end{align*}
Then, if we take the function $\widetilde{f}(a_i)=\frac{\sum_{i=1}^r f(a_i)}{r}$ for $i=1,\dots,r,$ and $\widetilde{f}(a_i)=\frac{\sum_{i=r+1}^n f(a_i)}{n-r}$ for $i=r+1,\dots,n,$ we have $$\frac{\|M_{K_n}\widetilde{f}\|_{p}}{\|\widetilde{f}\|_p}\ge \|M_{K_n}\|_{p},$$ with equality if and only if $f(a_i)=\widetilde{f}(a_i)$ for every $i=1,\dots n.$ So, we conclude.
\end{proof}
%%%%%%%%%%%
We also get the following result.
\begin{proposition}\label{threevaluesstar}
Let $S_n=(V,E)$ be the star graph with $n$ vertices $V=\{a_1,a_2,\dots,a_n\}$ with center at $a_1$ and let $p\geq 1$. There exists $f:V\to \mathbb{R}$ with $$\frac{\|M_{S_n}f\|_{p}}{\|f\|_{p}}=\|M_{S_n}\|_{p},$$ such that  $f(a_1)=\max{f}$ and $f_{|V\setminus{a_1}}$ takes (at most) two values.
\end{proposition}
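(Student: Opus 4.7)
The plan is to combine Proposition~\ref{prop: existen extremizers} with a two-step reduction. Let $f$ be a nonnegative extremizer supplied by Proposition~\ref{prop: existen extremizers}, and after rescaling assume $\max f=1$.

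First, I would arrange that the maximum is attained at the center. If $f(a_1)<1$, the value $1$ must be attained at some leaf $a_k$; let $g$ be obtained by exchanging the values at $a_1$ and $a_k$. The multiset of values and the total sum are preserved, so $\|g\|_p=\|f\|_p$ and the whole-graph average $t=\bigl(\sum_v f(v)\bigr)/n$ is unchanged. A direct check using the star structure gives $Mg(a_j)\ge Mf(a_j)$ at every leaf $a_j\ne a_k$, since the two-point averages with $a_1$ only improve when $f(a_1)$ is replaced by $1$, and similarly $Mg(a_1)=1\ge Mf(a_1)$; at $a_k$ one has $Mg(a_1)^p+Mg(a_k)^p\ge Mf(a_1)^p+Mf(a_k)^p$ because $Mg(a_k)\ge (f(a_1)+1)/2\ge f(a_1)$ while $Mf(a_k)=1=Mg(a_1)$. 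Therefore $\|Mg\|_p\ge\|Mf\|_p$, and $g$ is an extremizer with $g(a_1)=\max g$; replace $f$ by $g$.

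Next, normalize $f(a_1)=1$ and write $b_i=f(a_i)$, $\sigma=\sum_{i\ge 2}b_i$, $t=(1+\sigma)/n$. One computes $Mf(a_1)=1$ and $Mf(a_i)=\max\{(b_i+1)/2,t\}$, so the leaves split into an \emph{upper} set $I=\{i:b_i\ge 2t-1\}$, on which $Mf(a_i)=(b_i+1)/2$, and a \emph{lower} set $J$, on which $Mf(a_i)=t$. A Jensen swap within $J$ shows all lower values coincide: if $b_i\ne b_j$ in $J$, replace both by $(b_i+b_j)/2$; this preserves $\sigma$ and the partition (the mean stays $<2t-1$), hence leaves $\|Mf\|_p$ unchanged but strictly decreases $\|f\|_p$ by strict convexity of $x\mapsto x^p$ for $p>1$. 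This contradicts extremality unless $b_i=b_j$. The case $p=1$ is trivial since delta functions at the center are already extremizers.

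For the upper set $I$ the analogous mean-replacement again preserves $\sigma$ and the partition, but now both $\|Mf\|_p^p$ and $\|f\|_p^p$ decrease simultaneously. Writing $b_i=m+\delta$, $b_j=m-\delta$ with $m=(b_i+b_j)/2$ and expanding to second order, one finds $\Delta\|Mf\|_p^p/\Delta\|f\|_p^p\to\tfrac14\bigl((m+1)/(2m)\bigr)^{p-2}$ as $\delta\to 0$; this limit is at most $1/4<\|M_{S_n}\|_p^p$ whenever $1\le p\le 2$, so the swap strictly increases the ratio, contradicting extremality and forcing all upper values to agree. Combined with the lower-set reduction, the extremizer then has at most two distinct leaf values. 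For $p>2$ the limit $\tfrac14((m+1)/(2m))^{p-2}$ can exceed $\|M_{S_n}\|_p^p$ when $m$ is small, so the naive Jensen swap is inconclusive and one must instead use the full first-order optimality conditions: the pairwise secant condition on $I$ combined with the strict monotonicity of $\phi'/\psi'=\tfrac12((x+1)/(2x))^{p-2}$ on $(0,1]$ (with $\phi(x)=((x+1)/2)^{p-1}$, $\psi(x)=x^{p-1}$) forces at most two distinct upper values by the mean value theorem, and the cross-group stationarity relation obtained from perturbing one $i\in I$ against one $j\in J$ rules out the coexistence of two distinct upper values with a nonempty lower set. The main obstacle is precisely this final collapse in the regime $p>2$, where a single Jensen-type inequality no longer separates numerator from denominator and one has to exploit the rigidity of the stationarity equations at the extremizer.
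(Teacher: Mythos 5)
Your Step 1 (swapping the maximum into the center) and your treatment of the lower set $J$ (arithmetic-mean replacement, which fixes $t$, leaves $\|M_{S_n}f\|_p$ unchanged and strictly decreases $\|f\|_p$) match the paper's Steps 1 and 2 and are fine, modulo the small point that your second-order expansion for the upper set is stated only as a limit $\delta\to 0$; for the finite swap you actually need the pointwise bound $\phi''(x)\le\tfrac14\psi''(x)$ on $(0,1]$ (with $\phi(x)=((x+1)/2)^p$, $\psi(x)=x^p$), which does hold for $1<p\le 2$ and repairs that case. The genuine gap is the regime $p>2$, which the proposition also claims. Your Cauchy-mean-value argument plausibly caps the number of distinct upper values at two, but the conclusion you need is stronger: you must exclude \emph{two distinct upper values together with a nonempty lower set}, since that configuration has three leaf values. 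You assert that the cross-group stationarity relation does this, but it does not: perturbing $b_i\to b_i+\epsilon$, $b_j\to b_j-\epsilon$ with $i\in I$, $j\in J$ gives $\tfrac12\bigl(\tfrac{u+1}{2}\bigr)^{p-1}=R\,(u^{p-1}-z^{p-1})$ for each upper value $u$ and the common lower value $z$, and subtracting these relations for two upper values reproduces exactly the within-$I$ secant condition. In other words, the cross-group and within-group first-order conditions are mutually consistent, and for $p>2$ the function $v\mapsto\tfrac12\bigl(\tfrac{v+1}{2}\bigr)^{p-1}-Rv^{p-1}$ is unimodal on $(0,1]$, so it genuinely can take the same value at two points. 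Ruling this out requires second-order or quantitative information (lower bounds on $R=\|M_{S_n}\|_p^p$, the constraint $u\ge 2t-1>z$, etc.); this is precisely the delicate analysis the paper performs in its Theorem 4 and only closes for $p\in(1,2]$, or for large $n$. So as written your proof establishes the proposition only for $1\le p\le 2$.

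The idea you are missing is the paper's Step 3: on the upper set, replace the values not by their arithmetic mean but by their $\ell^p$-mean $\bigl(\tfrac{1}{r-1}\sum_{j}b_j^p\bigr)^{1/p}$. This preserves $\|f\|_p$ \emph{exactly}; by the power-mean inequality the new global average satisfies $\widetilde m\ge m$, so the maximal function does not decrease at the center or at the lower leaves; and Minkowski's inequality, in the form $\bigl(\sum_j(1+b_j)^p\bigr)^{1/p}\le (r-1)^{1/p}+\bigl(\sum_j b_j^p\bigr)^{1/p}$, shows the upper-leaf contribution $\sum_j\bigl(\tfrac{1+b_j}{2}\bigr)^p$ does not decrease either. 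This makes the ratio monotone under the replacement for every $p\ge 1$ in one stroke, with no case split at $p=2$, no second-order expansion, and no stationarity analysis.
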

\begin{proof}
By Proposition \ref{prop: existen extremizers} there exists $g\ge 0$ such that $$\frac{\|M_{S_n}g\|_{p}}{\|g\|_{p}}=\|M_{S_n}\|_{p}.$$
Now, we proceed in three steps.
\\
{\it{Step 1: We can assume that $g(a_1)\geq g(a_j)$ for all $j\in\{2,3,\dots,n\}$}}.
We assume without loss of generality that $g(a_2)\ge \dots \ge g(a_r)\ge g(a_1)\ge \dots \ge g(a_n),$ consider $\widetilde{g}(x):=g(x)$ for $x\in V\setminus\{a_2,a_1\}$, $\widetilde{g}(a_2):=g(a_1)$ and $\widetilde{g}(a_1):=g(a_2).$ We observe that $$M_{S_n}\widetilde{g}(a_1)^p+M_{S_n}\widetilde{g}(a_2)^p=g(a_2)+\max\left\{m^p,\left(\frac{g(a_2)+g(a_1)}{2}\right)^p\right \}\ge M_{S_n}g(a_2)^p+M_{S_n}g(a_1)^p.$$ Also, for $x\in V\setminus\{a_1,a_2\},$ we have that $M_{S_n}\widetilde{g}(x)\ge M_{S_n}g(x)$ since $$\max \left\{m,\frac{g(x)+\widetilde{g}(a_1)}{2},g(x)\right\}\ge \max \left\{m,\frac{g(x)+g(a_1)}{2},g(x)\right\}.$$ Therefore, we get $$\sum_{i=1}^{n}M_{S_n}\widetilde{g}(a_1)^p\ge \sum_{i=1}^{n}M_{S_n}g(a_i)^p,$$ since clearly we have that $\|\widetilde g\|_p=\|g\|_p$. We conclude that $$\frac{\|M_{S_n}\widetilde g\|_{p}}{\|\widetilde g\|_{p}}=\|M_{S_n}\|_{p}.$$  So, we can assume that $g(a_1)\ge g(a_j)$ for every $j.$\\

Then, we assume without loss of generality that  $g(a_1)\ge \dots \ge g(a_r)\ge 2m-g(a_1)
>g(a_{r+1})\ge \dots g(a_n)$. 

{\it{Step 2: We can assume that $g(a_{r+1})=g(a_{r+2})=\dots=g(a_n)$.}}
We consider the function $\widetilde g:V\to\R$ defined by  $\widetilde{g}(a_i)=\frac{\sum_{i=r+1}^n g(a_i)}{n-r}$ for every $i=r+1\dots n$ and $\widetilde{g}=g$ otherwise. We have (similarly as in the previous proposition) that $$\frac{\|M_{S_n}\widetilde{g}\|_{p}}{\|\widetilde{g}\|_{p}}\ge \|M_{S_n}\|_{p}.$$ Therefore, we can assume that $g(a_i)=g(a_n)$ for every $i\ge r+1.$\\

{\it{Step 3: We can assume that $g(a_2)=g(a_3)=\dots=g(a_r)$.}}
Now consider $$\widetilde{g}(a_i)=\left(\frac{\sum_{j=2}^r g(a_j)^p}{r-1}\right)^{\frac{1}{p}}$$ for $i=2,\dots,r$ and $\widetilde{g}=g$ elsewhere. Since $\sum_{i=1}^n|\widetilde{g}(a_i)|^p=\sum_{i=1}^{n}|g(a_i)|^p$ is enough to prove that $$\sum_{i=1}^n|M_{S_n}\widetilde{g}(a_i)|^p\ge \sum_{i=1}^n|M_{S_n}g(a_i)|^p.$$ Let us observe first that $$\widetilde{m}:=\frac{\sum_{i=1}^n\widetilde{g}(a_i)}{n}\ge \frac{\sum_{i=1}^n g(a_i)}{n}=m,$$since $$(r-1)\left(\frac{\sum_{i=2}^r g(a_i)^p}{r-1}\right)^{1/p}\ge \sum_{i=2}^r g(a_i) $$ by H\"older's inequality. Thus, 
for $i=r+1,\dots,n$ we have that $M_{S_n}\widetilde{g}(a_i)\ge \widetilde{m}\ge m=M_{S_n}g(a_i).$ 
Also, we observe that for all $i\in\{2,\dots,r\}$ we have $$M_{S_n}\widetilde{g}(a_i)\ge \frac{g(a_1)+\widetilde{g}(a_i)}{2}.$$  
So, it is enough to prove that $$(r-1)\left(\frac{g(a_1)+\left(\frac{\sum_{j=2}^r g(a_j)^p}{r-1}\right)^{1/p}}{2}\right)^p=\sum_{i=2}^{r}\left(\frac{g(a_1)+\widetilde{g}(a_i)}{2}\right)^p\ge \sum_{i=2}^{r}M_{S_n}g(a_i)^p=\sum_{i=2}^{r}\left(\frac{g(a_1)+g(a_i)}{2}\right)^p,$$ but that is equivalent to $$g(a_1)+\left(\frac{\sum_{j=2}^r g(a_j)^p}{r-1}\right)^{1/p}\ge \left(\frac{\sum_{i=2}^{r}(g(a_1)+g(a_i))^p}{r-1}\right)^{1/p},$$ which is a consequence of Minkowsky's inequality. From where we conclude our required result. 

\end{proof}

%\newb{n=3, reduction to two values}.

%In particular, for n=3, we have that
%$$
%\|M_{S_3}\|_{p}^p=\sup_{\gamma\geq a\geq b\geq0;\gamma+a+b>0}\frac{\gamma^p+(\frac{\gamma+a}{2})^p+\max\{(\frac{\gamma+a+b}{3})^p,(\frac{\gamma+b}{2})^p\}}{\gamma^p+a^p+b^p}.
%$$

\begin{theorem}
For all $n\geq 3$, let $S_n=(V,E)$ be the star graph with $n$ vertices $V=\{a_1,a_2,\dots,a_n\}$ with center at $a_1$. For all $p\in(1,2]$ we have that $$\|M_{S_n}\|_p=\left(\sup_{x\in [0,1)}\frac{1+(n-1)(\frac{x+1}{2})^p}{1+(n-1)x^p}\right)^{\frac{1}{p}}.$$
\end{theorem}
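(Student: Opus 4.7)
For each $x \in [0,1)$ and $n \geq 3$, the two-value function $f_x := \mathbf{1}_{\{a_1\}} + x\,\mathbf{1}_{V \setminus \{a_1\}}$ satisfies $\frac{1+x}{2} > \frac{1+(n-1)x}{n}$ (equivalent to $x<1$ for $n>2$), so $M_{S_n}f_x(a_1) = 1$ and $M_{S_n}f_x(a_i) = \frac{1+x}{2}$ for $i \geq 2$, giving ratio
\[
\Psi(x) := \frac{1 + (n-1)\left(\tfrac{1+x}{2}\right)^p}{1 + (n-1)x^p}.
\]
Thus $\|M_{S_n}\|_p^p \geq \sup_{x \in [0,1)} \Psi(x)$. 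For the reverse direction, Propositions~\ref{prop: existen extremizers} and~\ref{threevaluesstar} give an extremizer $f$ which, after normalizing $\max f = f(a_1) = 1$ and relabeling, has $f(a_i) = y$ for $2 \leq i \leq r$ and $f(a_j) = x$ for $r+1 \leq j \leq n$, with $0 \leq x \leq y \leq 1$ and $y \geq 2m - 1 > x$, where $m := \frac{1+(r-1)y+(n-r)x}{n}$.

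\textbf{Case analysis.} If $r = n$ then $f$ takes only two values and the ratio equals $\Psi(y) \leq \sup_t \Psi(t)$. The case $r = 1$ cannot arise: it would force $x < 2m - 1$ with $m = (1+(n-1)x)/n$, which after clearing denominators becomes $(2-n)x < 2-n$, i.e.\ $x > 1$, contradicting $x \leq 1$. The remaining main case is $2 \leq r \leq n-1$, where $f$ takes three distinct values $1,y,x$. For this it suffices to prove $R := \|M_{S_n}f\|_p^p / \|f\|_p^p \leq \Psi(x)$, since then $R \leq \sup_t \Psi(t)$.

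\textbf{Key inequality and main obstacle.} Setting $s := r-1$, $t := n-r$, $a := \left(\tfrac{1+y}{2}\right)^p$, $\beta := \left(\tfrac{1+x}{2}\right)^p$, $\gamma := m^p$, $u := y^p$, $v := x^p$, cross multiplying $R \leq \Psi(x)$ and expanding (using $n-1 = s+t$) reduces the claim to
\[
s(u-v) + s(\beta-a) + s(s+t)(\beta u - a v) + t(\beta-\gamma)\bigl(1+(s+t)v\bigr) \geq 0.
\]
The first and third terms are positive: $u>v$ since $y>x$, and $\beta u - a v > 0$ because $\mu y - \nu x = \tfrac{y-x}{2} > 0$ with $\mu := \tfrac{1+x}{2}$, $\nu := \tfrac{1+y}{2}$. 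The second and fourth terms are nonpositive, from $\mu \leq \nu$ and the constraint $\mu < m$. Showing that the positive terms dominate is the heart of the argument and the main obstacle. The plan is to apply the mean value theorem to each difference of $p$-th powers, writing for instance $a-\beta = p\xi_1^{p-1}\tfrac{y-x}{2}$, $\gamma-\beta = p\xi_2^{p-1}(m-\mu)$, $u-v = p\xi_3^{p-1}(y-x)$, and $\beta u - a v = p\eta^{p-1}\tfrac{y-x}{2}$ for appropriate intermediate values, and to substitute the identity $m - \mu = \tfrac{1}{2n}\bigl[(2-n) + 2(r-1)y + (n-2r)x\bigr]$ to eliminate $m$. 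The hypothesis $p \in (1,2]$ is essential in comparing the exponents $\xi_i^{p-1}$ and $\eta^{p-1}$: it is precisely in this range that the second derivative of $t\mapsto t^p$ remains bounded on $[0,1]$, allowing the positive $\beta u - a v$ contribution (of order $(s+t)$) to absorb both the $\mu$-$\nu$ gap and the $\mu$-$m$ gap.
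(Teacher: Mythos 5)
Your lower bound, the reduction via Propositions \ref{prop: existen extremizers} and \ref{threevaluesstar}, and the elimination of $r=1$ are all fine, and your cross-multiplied identity
$$s(u-v)+s(\beta-a)+s(s+t)(\beta u-av)+t(\beta-\gamma)\bigl(1+(s+t)v\bigr)\ \ge\ 0$$
is the correct algebraic form of the claim $R\le\Psi(x)$. But the argument stops exactly at the step that matters: you identify the sign of each term and then offer only a ``plan'' (mean value theorem, substitute the formula for $m-\mu$, use $p\le 2$ to control second derivatives) without carrying it out, and you yourself call this ``the heart of the argument and the main obstacle.'' Nothing in the proposal shows that the two positive terms dominate the two negative ones: the negative term $t(\beta-\gamma)\bigl(1+(s+t)v\bigr)$ carries a factor of order $t(s+t)$ comparable to the factor $s(s+t)$ on the positive term, and the gap $m-\mu$ need not be small relative to $\nu-\mu$, so domination is not a routine consequence of bounded second derivatives on $[0,1]$. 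Moreover, $R\le\Psi(x)$ is strictly stronger than what the theorem needs (only $R\le\sup_t\Psi(t)$ is required), and you give no evidence that this particular comparison point works. As written, the theorem is not proved.

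The paper avoids this inequality entirely. It only has to rule out a genuinely three-valued \emph{extremizer}: assuming $x<2m_f-1<y$, it perturbs the $y$-values, uses that $L(\varepsilon)=\|M_{S_n}f_\varepsilon\|_p^p-\|M_{S_n}\|_p^p\|f_\varepsilon\|_p^p$ is maximized at $\varepsilon=0$ to get the first-order identity \eqref{derivative}, and combines it with the crude bounds $\|M_{S_n}\|_p^p\ge 1+\tfrac{n-1}{2^p}\ge\tfrac{n+3}{4}$, $y\ge m_f$ and $y>\tfrac14$ to reach $\tfrac{n-1}{2}+\tfrac4n<\tfrac52$, a contradiction for $n\ge 4$ (the boundary case $y=1$ uses a one-sided derivative, and $n=3$ is treated separately). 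If you want to salvage your direct approach, you would need to actually prove your key inequality on the constrained region, or switch to an optimality-based argument of this kind. You should also note that Proposition \ref{threevaluesstar} only gives two leaf values, not that they straddle $2m_f-1$; the configurations with both values on the same side of $2m_f-1$ must first be collapsed to a single value by the H\"older and Minkowski arguments of Steps 2 and 3 of that proposition, a case your write-up assumes away.
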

\begin{proof}First, let us assume that $n>3$. Let $f:V\to\R$ be a function such that $\frac{\|M_{S_n}f\|_{p}}{\|f\|_{p}}=\|M_{S_n}\|_{p}$ as in Proposition \ref{threevaluesstar}. After a normalization (if necessary) we can assume that $f(a_1)=1$. By Proposition \ref{threevaluesstar} we have that $f_{|V\setminus{a_1}}$ only takes two values, let us say $x\le y\le 1$, $x$ s-times and $y$ t-times. We will prove that $x=y$. We observe that if both $x$ and $y$ satisfy $x,y\ge 2m_{f}-1$ by the same argument as in Proposition \ref{threevaluesstar} we conclude that $x=y.$ The same happens if $x,y\le 2m_{f}-1.$ So, the only case remaining is when $x< 2m_{f}-1<y$. 
Then, we observe that by taking a Dirac's delta in $a_1$ we have that $\|M_{S_n}\|_{p}^p\ge 1+\frac{n-1}{2^p}\ge \frac{n+3}{4}.$ Let us first assume that $y<1,$ given $\varepsilon$ such that $$1>y+\varepsilon>2\left(\frac{1+t(y+\varepsilon)+sx}{n}\right)-1,$$ we consider $f_{\varepsilon}:V\to \mathbb{R}$ defined by $f_{\varepsilon}(a_i)=f(a_i)+\varepsilon$ for all $a_i$ such that $f(a_i)=y$ and $f_{\varepsilon}=f$ elsewhere. If we consider the function  (defined in a neighborhood of $0$) $$L(\varepsilon):=\|M_{S_n}f_{\varepsilon}\|_{p}^p-\|M_{S_n}\|_{p}^p\|f_{\varepsilon}\|_{p}^p,$$ we have $L(0)=0$ and $L(\varepsilon)\le 0$ in a neighborhood of $0$. Therefore $L'(0)=0$, that is
\begin{align}\label{derivative}
0&=\left(1+t\left(\frac{1+y+\varepsilon}{2}\right)^p+s\left(\frac{1+sx+t(y+\varepsilon)}{n}\right)^p-\|M_{S_n}\|_{p}^p(1+t(y+\varepsilon)^p+sx^p)\right)'\nonumber\\
&=\frac{tp(\frac{1+y}{2})^{p-1}}{2}+s\frac{tp}{n}\left(\frac{1+sx+ty}{n}\right)^{p-1}-tp\|M_{S_n}\|_{p}^p(y^{p-1}).
\end{align}
We observe that in fact $y\ge \frac{1+x}{2},$ if not we would have $m_f\leq \frac{1+x}{2},$ a contradiction. However, that implies that $y\ge m_f$ since is equivalent to $(s+1)y=(n-t)y\ge sx+1,$ which is true because $(s+1)\left(\frac{1+x}{2}\right)\ge sx+1$. Then  $$\frac{n+3}{4}y^{p-1}\le \|M_{S_n}\|_{p}^{p}y^{p-1}\le\frac{(\frac{1+y}{2})^{p-1}}{2}+\frac{n-2}{n}\left(\frac{1+sx+ty}{n}\right)^{p-1}<\frac{(\frac{1+y}{2})^{p-1}}{2}+\frac{n-2}{n}y^{p-1}.$$ Then \begin{align}\label{desigualdadcr}
\frac{n-1}{2}+\frac{4}{n}<\left(\frac{1+y}{2y}\right)^{p-1}\le \frac{1+y}{2y}.
\end{align}
Also, we observe that since $2m_f-1>x>0,$ we have $m_f>\frac{1}{2},$ so if $x<y\le \frac{1}{4},$ we have $$\frac{1}{2}<m_f=\frac{1+sx+ty}{n}<\frac{1+\frac{s+t}{4}}{n}=\frac{1+\frac{n-1}{4}}{n}.$$ Therefore, $\frac{n}{4}<\frac{3}{4},$ a contradiction. So $y>\frac{1}{4},$ then $$\frac{1+y}{2y}<\frac{10}{4}=\frac{5}{2}.$$ Then, by \eqref{desigualdadcr}, we obtain
$\frac{n-1}{2}+\frac{4}{n}<\frac{5}{2},$ that is false for $n\ge 4.$ We conclude this case. The only remaining case is when $y=1.$ In this case, we have that (where $L$ is defined in an interval $(\delta,0]$, with $\delta$ close to 0) $$\frac{L(0)-L(-\varepsilon)}{\varepsilon}\ge 0.$$ Therefore taking $-\varepsilon\to 0^{-}$, similarly as we obtained \eqref{derivative}, we have that $0\le \frac{tp}{2}+s\frac{tp}{n}m^{p-1}-tp\|M_{S_n}\|_{p}^p$ and that implies $\frac{n+3}{4}\le \frac{1}{2}+1,$ which is false for $n>3.$ Therefore we conclude this case. The remaining case $n=3$ is treated as follows. By the same argument (and notation) above, if $x<2m_f-1<y$, we have $\|M_{S_3}\|_{p}^p\ge 1+\frac{2}{2^p}\ge \frac{3}{2}$ and $y\ge \frac{1}{2}.$ Proceeding as before, similarly as we obtained \eqref{derivative}, we have that $$\|M_{S_3}\|_{p}^py^{p-1}\le \frac{(\frac{1+y}{2})^{p-1}}{2}+\frac{1}{3}(m_f)^{p-1}\le \frac{(\frac{1+y}{2})^{p-1}}{2}+\frac{1}{3}y^{p-1},$$ 
therefore $\frac{7}{3}\le (\frac{1+y}{2y})^{p-1}\le (\frac{3}{2})^{p-1}\le \frac{3}{2},$ a contradiction. So, we conclude.
\end{proof}
\begin{remark}
An adaptation of the proof above also shows that for any $p>1$ there exists a positive constant $N(p)$ such that for any $n>N(p)$ we have $${\bf C}_{S_n,p}=\left(\sup_{x\in [0,1)}\frac{1+(n-1)(\frac{x+1}{2})^p}{1+(n-1)x^p}\right)^{\frac{1}{p}}.$$
\end{remark}

\section{Asymptotic behavior of $\|M_G\|_p$}
In the next propositions we study the behavior of $\|M_{K_n}\|_{p}$ and $\|M_{S_n}\|_{p}$ as $p\to \infty.$
We start with an useful elementary lemma.
\begin{lemma}\label{cgrlemalhopital}
Assume that for $\{p_{k}\}_{k\in \mathbb{N}}\subset [1,\infty)$ such that $p_k\to \infty$  we have $x_{1,p_k},\dots, x_{n,p_k}\ge 0$ such that $\underset{k\to \infty}{\lim} x_{i,p_k}^{p_k}\to x_{i}<\infty$, for every $i=1,\dots,n$. Then we have that 
\begin{align*}
\lim_{k\to \infty}\left(\frac{\sum_{i=1}^{n}x_{i,p_k}}{n}\right)^{p_k}=(x_{1}x_{2}\dots x_n)^{\frac{1}{n}}.    
\end{align*}
\end{lemma}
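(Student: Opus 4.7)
The plan is to rewrite $\left(\frac{\sum_{i} x_{i,p_k}}{n}\right)^{p_k} = (1 + \bar\epsilon_k)^{p_k}$ with $\bar\epsilon_k := \frac{1}{n}\sum_{i=1}^n (x_{i,p_k} - 1)$, and then extract the limit via the exponential representation $\exp\bigl(p_k \log(1 + \bar\epsilon_k)\bigr)$. The degenerate case $\bar\epsilon_k = -1$ (all $x_{i,p_k}=0$) gives $0$ on both sides trivially, so I may assume $\bar\epsilon_k > -1$ throughout.

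The central pointwise ingredient I would establish is
\[
\lim_{k \to \infty} p_k\bigl(x_{i,p_k} - 1\bigr) \;=\; \log x_i, \qquad i = 1,\dots,n,
\]
adopting the convention $\log 0 = -\infty$. When $x_i > 0$, the hypothesis $x_{i,p_k}^{p_k} \to x_i$ combined with $p_k \to \infty$ forces $x_{i,p_k} \to 1$; writing $p_k \log x_{i,p_k} = p_k(x_{i,p_k}-1)\bigl(1 + O(x_{i,p_k}-1)\bigr)$ and using that the left-hand side tends to $\log x_i$ yields the stated limit. When $x_i = 0$, for every $M > 0$ the hypothesis gives $x_{i,p_k} \le e^{-M/p_k}$ eventually, so $p_k(x_{i,p_k}-1) \le p_k(e^{-M/p_k}-1) \to -M$; letting $M \to \infty$ forces $p_k(x_{i,p_k}-1) \to -\infty$.

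From here the proof splits into two cases. If all $x_i > 0$, then $\epsilon_{i,k} := x_{i,p_k} - 1 \to 0$ for each $i$, so $\bar\epsilon_k \to 0$ and $p_k \bar\epsilon_k \to \frac{1}{n}\sum_i \log x_i = \log(x_1\cdots x_n)^{1/n}$. Combining $\log(1 + \bar\epsilon_k) = \bar\epsilon_k(1 + o(1))$ with the boundedness of $p_k \bar\epsilon_k$ (bounded because it converges to a finite limit) gives $p_k \log(1 + \bar\epsilon_k) \to \log(x_1\cdots x_n)^{1/n}$, and exponentiating concludes this case.

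The anticipated obstacle is the case where some $x_j = 0$: the target $(x_1 \cdots x_n)^{1/n}$ collapses to $0$ and the Taylor expansion around $\bar\epsilon_k = 0$ no longer captures the correct behaviour. My workaround is to observe that $p_k \bar\epsilon_k \to -\infty$, since the $j$-th summand $p_k \epsilon_{j,k}$ diverges to $-\infty$ while the remaining summands stay bounded; in particular $\bar\epsilon_k < 0$ for large $k$. The elementary inequality $\log(1 + y) \leq y$, valid for $y > -1$, then yields
\[
p_k \log(1 + \bar\epsilon_k) \;\leq\; p_k \bar\epsilon_k \to -\infty,
\]
so $(1 + \bar\epsilon_k)^{p_k} \to 0 = (x_1\cdots x_n)^{1/n}$ and the lemma follows.
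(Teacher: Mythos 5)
Your proof is correct, but it takes a genuinely different route from the paper's. The paper squeezes the average from both sides: the lower bound $\bigl(\tfrac{1}{n}\sum_i x_{i,p_k}\bigr)^{p_k}\ge\bigl(\prod_i x_{i,p_k}^{p_k}\bigr)^{1/n}$ comes for free from the AM--GM inequality, and for the upper bound one majorizes $x_{i,p_k}\le(x_i+\varepsilon)^{1/p_k}$ for $k$ large, computes $\lim_k\bigl(\tfrac{1}{n}\sum_i(x_i+\varepsilon)^{1/p_k}\bigr)^{p_k}=\bigl(\prod_i(x_i+\varepsilon)\bigr)^{1/n}$ by L'Hospital after taking logarithms, and then lets $\varepsilon\to0$; the $\varepsilon$-shift makes the case of vanishing $x_j$ cost nothing extra. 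You instead establish the single pointwise asymptotic $p_k(x_{i,p_k}-1)\to\log x_i$ and linearize $\log(1+\bar\epsilon_k)$, which forces a case split on whether some $x_j=0$ but avoids both AM--GM and the auxiliary majorant; your argument is a bit longer but entirely elementary and makes the underlying mechanism, $x_{i,p_k}=1+\tfrac{\log x_i}{p_k}+o(1/p_k)$, explicit. One cosmetic point: when more than one $x_j$ vanishes, several summands of $n\,p_k\bar\epsilon_k$ diverge to $-\infty$, so it is not accurate to say the remaining summands ``stay bounded''; what you need (and what is true) is that every summand is eventually bounded above while at least one tends to $-\infty$, which still gives $p_k\bar\epsilon_k\to-\infty$.
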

\begin{proof}
By AM-GM inequality we have $$\left(\frac{\sum_{i=1}^{n}x_{i,p_k}}{n}\right)^{p_k}\ge \left(x_{1,p_k}^{p_k}x_{2,p_k}^{p_k}\dots x_{n,p_k}^{p_k}\right)^{\frac{1}{n}}\to (x_{1}x_{2}\dots x_n)^{\frac{1}{n}}.$$ So, we just need to prove that $$\underset{k\to \infty}{\lim \sup} \left(\frac{\sum_{i=1}^{n}x_{i,p_k}}{n}\right)^{p_k}\le (x_{1}x_{2}\dots x_n)^{\frac{1}{n}}.$$ Given $\varepsilon>0$, for $k$ big enough we have that $x_{i,p_k}\le (x_i+\varepsilon)^{\frac{1}{p_k}}$ for every $i=1,\dots, n$. Then, we observe that  $$\lim_{k\to \infty}\left(\frac{\sum_{i=1}^{n}(x_i+\varepsilon)^{\frac{1}{p_k}}}{n}\right)^{p_k}=((x_1+\varepsilon)(x_2+\varepsilon)\dots (x_n+\varepsilon))^{\frac{1}{n}}$$ by the L'Hospital rule after applying $\log$ in both sides. Therefore, for every given $\varepsilon>0$ we have $$\underset{k\to \infty}{\lim \sup} \left(\frac{\sum_{i=1}^n x_{i,p_i}}{n}\right)^{p_k}\le ((x_1+\varepsilon)(x_2+\varepsilon)\dots (x_n+\varepsilon))^{\frac{1}{n}},$$ from where we conclude.  
\end{proof}
Now we continue by analyzing the behavior of $\|M_{K_n}\|_{p}^{p}$ when $p$ goes to $\infty$.  
In the following lemma we construct an example that helps us to achieve that goal. 

%%%%-----Lemma 6---------%%%%
\begin{lemma}\label{A.B K_n lim inf }
Let $K_n=(V,E)$ be the complete graph with $n$ vertices\ $V=\{a_1,a_2\dots,a_n\}$. Then, $$\underset{p\to \infty}{\lim \inf}\ \|M_{K_n}\|_{p}^{p}\ge \underset{\alpha>1, k\in \{1,\dots,n\}}{\sup}\frac{k\alpha^{\frac{n}{k}}+\alpha(n-k)}{k\alpha^{\frac{n}{k}}+n-k}.$$ 
\end{lemma}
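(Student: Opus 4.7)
The plan is to construct, for each fixed $\alpha>1$ and $k\in\{1,\ldots,n\}$, an explicit family of non-negative test functions $f_p$ on $K_n$ that, in the limit $p\to\infty$, realizes the ratio on the right-hand side. The desired inequality then follows from $\|M_{K_n}\|_p^p\geq \|M_{K_n}f_p\|_p^p/\|f_p\|_p^p$ after taking $\liminf$ and supremum over $(\alpha,k)$.

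The first observation I would record is the special geometry of the complete graph: for any vertex $v$, the only possible balls are $\{v\}$ (radius $0$) and $V$ itself (radius $\geq 1$), and hence
$$M_{K_n}f(v) = \max\Bigl(|f(v)|,\,\tfrac{1}{n}\textstyle\sum_{u\in V}|f(u)|\Bigr).$$
This reduces the problem to a purely arithmetic one. Given $\alpha>1$ and $k\in\{1,\ldots,n\}$, I define
$$f_p(a_i) := \beta_p := \alpha^{n/(kp)} \quad (i=1,\ldots,k), \qquad f_p(a_i):=1 \quad (i=k+1,\ldots,n),$$
and write $m_p := \frac{k\beta_p+(n-k)}{n}$ for its average. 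Since $\beta_p>1$, an elementary computation shows $\beta_p>m_p>1$ when $k<n$, so $M_{K_n}f_p$ equals $\beta_p$ on the first $k$ vertices and $m_p$ on the remaining $n-k$. Therefore
$$\frac{\|M_{K_n}f_p\|_p^p}{\|f_p\|_p^p} = \frac{k\beta_p^p+(n-k)m_p^p}{k\beta_p^p+(n-k)}.$$

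The key step is the evaluation of $\lim_{p\to\infty} m_p^p$. By construction $\beta_p^p = \alpha^{n/k}$ is \emph{independent of $p$}, and of course $1^p=1$. Applying Lemma \ref{cgrlemalhopital} to the $n$-tuple consisting of $k$ copies of $\beta_p$ and $n-k$ copies of $1$ yields
$$\lim_{p\to\infty} m_p^p = \bigl((\alpha^{n/k})^{k}\cdot 1^{\,n-k}\bigr)^{1/n} = \alpha.$$
Substituting back gives
$$\lim_{p\to\infty}\frac{\|M_{K_n}f_p\|_p^p}{\|f_p\|_p^p}=\frac{k\alpha^{n/k}+(n-k)\alpha}{k\alpha^{n/k}+(n-k)},$$
and taking the supremum over admissible pairs $(\alpha,k)$ concludes the proof. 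The boundary case $k=n$ is vacuous (the expression equals $1$) and requires no separate argument.

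I do not anticipate a serious obstacle: the entire proof is a careful bookkeeping of an explicit test family plus one invocation of Lemma \ref{cgrlemalhopital}. The only point that needs mild care is the normalization $\beta_p=\alpha^{n/(kp)}$, chosen precisely so that $\beta_p^p$ stays bounded and nontrivial while $\beta_p\to 1$, which is exactly the regime in which the AM--GM-type limit in Lemma \ref{cgrlemalhopital} is meaningful.
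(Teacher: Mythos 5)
Your proof is correct and follows essentially the same route as the paper: both use the two-valued test family taking a common value $>1$ on $k$ vertices and $1$ on the rest, so that the maximal function is the function itself on the first $k$ vertices and the mean elsewhere. The only (cosmetic) difference is the normalization — the paper fixes the mean to be $\alpha^{1/p}$ and computes $\lim_{p\to\infty}(\text{large value})^{p}=\alpha^{n/k}$ by L'H\^opital, whereas you fix the large value so that its $p$-th power is exactly $\alpha^{n/k}$ and obtain $\lim_{p\to\infty}m_p^{p}=\alpha$ from Lemma \ref{cgrlemalhopital}; the two computations are equivalent.
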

\begin{proof}[Proof of Lemma \ref{A.B K_n lim inf }]
For fixed $k$ and $\alpha>1$ we define the function $f:V\to \mathbb{R}_{\ge 0}$ fiven by $f_{p}(a_i)=\frac{n\alpha^{\frac{1}{p}}-(n-k)}{k}$ for $i\le k,$ and $f_{p}(a_i)=1$ elsewhere. Thus we have $m_{p}:=\frac{\sum_{i=1}^{n}f_{p}(a_i)}{n}=\alpha^{\frac{1}{p}}.$ Moreover, we observe that $$\lim_{p\to \infty} \left(\frac{n\alpha^{\frac{1}{p}}-(n-k)}{k}\right)^{p}=\alpha^{\frac{n}{k}},$$ therefore
\begin{align*}
\underset{p\to \infty}{\lim \inf}\ \|M_{K_n}\|_{p}^{p}&\ge \lim_{p\to \infty} \frac{k\left(\frac{n\alpha^{\frac{1}{p}}-(n-k)}{k}\right)^{p}+(n-k)m_{p}^{p}}{k\left(\frac{n\alpha^{\frac{1}{p}}-(n-k)}{k}\right)^{p}+(n-k)}\\
&=\frac{k\alpha^{\frac{n}{k}}+(n-k)\alpha}{k\alpha^{\frac{n}{k}}+(n-k)},
\end{align*}
from where we conclude.
\end{proof}

We observe that the previous proof gives us the lower bound $$\|M_{K_n}\|_{p}^{p}\ge \underset{\alpha>1, k\in \{1,\dots,n\}}{\sup} \frac{k\left(\frac{n\alpha^{\frac{1}{p}}-(n-k)}{k}\right)^{p}+(n-k)\alpha}{k\left(\frac{n\alpha^{\frac{1}{p}}-(n-k)}{k}\right)^{p}+(n-k)},$$
for every $p\ge 1.$
Now we claim that this lower bound gives essentially the behavior when $p\to \infty$ for $\|M_{K_n}\|_{p}^p.$ This is the content of the following theorem. 
%%%-----THM 7-------%%%%%%
\begin{theorem}\label{A.B K_n thm}
Let $n\ge 3$ and let $K_n=(V,E)$ be the complete graph with $n$ vertices\ $V=\{a_1,a_2\dots,a_n\}$.
Then, $$\lim_{p\to \infty}\|M_{K_n}\|_{p}^p=\underset{\alpha>1, k\in \{1,\dots,n\}}{\sup}\frac{k\alpha^{\frac{n}{k}}+\alpha(n-k)}{k\alpha^{\frac{n}{k}}+n-k}.$$
\end{theorem}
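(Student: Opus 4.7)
The lower-bound half is already in Lemma~\ref{A.B K_n lim inf }, so my plan is to establish the matching upper bound
$$\limsup_{p\to\infty}\|M_{K_n}\|_p^p \le \sup_{\alpha>1,\,k\in\{1,\dots,n\}}\frac{k\alpha^{n/k}+(n-k)\alpha}{k\alpha^{n/k}+(n-k)}.$$
The starting point is Proposition~\ref{twovaluescomplete}: every extremizer takes at most two values. Combined with Proposition~\ref{prop: existen extremizers}, I can pick, for each $p>1$, a nonnegative extremizer $f_p$ equal to $A_p$ on some $k_p$ vertices and to $B_p\le A_p$ on the remaining $n-k_p$ vertices. Writing $m_p = (k_pA_p + (n-k_p)B_p)/n$ and using the trivial fact $M_{K_n}h(v)=\max(h(v), \bar h)$ for nonnegative $h$ on a complete graph, one obtains the exact formula
$$\|M_{K_n}\|_p^p = \frac{k_p A_p^p + (n-k_p)m_p^p}{k_p A_p^p + (n-k_p)B_p^p}.$$

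I then fix a sequence $p_j\to\infty$ and extract a subsequence along which $k_{p_j}\equiv k$ is constant. If $B_{p_j}\equiv 0$ along this subsequence, the ratio reduces to $1+(n-k)k^{-1}(k/n)^{p_j}\to 1$, which is harmlessly below the supremum. Otherwise I normalize so that $B_{p_j}=1$, set $\beta_j := A_{p_j}\ge 1$, and extract once more so that $\beta_j^{p_j}\to \gamma \in [1,\infty]$. When $\gamma<\infty$, one has $\beta_j\to 1$, and Lemma~\ref{cgrlemalhopital} applied to the $n$-tuple consisting of $k$ copies of $\beta_j$ and $n-k$ copies of $1$ yields
$$m_{p_j}^{p_j} = \left(\frac{k\beta_j + (n-k)}{n}\right)^{p_j}\longrightarrow \gamma^{k/n}.$$
Setting $\alpha := \gamma^{k/n}\ge 1$, so that $\gamma=\alpha^{n/k}$, the ratio passes in the limit to $\frac{k\alpha^{n/k}+(n-k)\alpha}{k\alpha^{n/k}+(n-k)}$, which lies below the supremum in the statement (the boundary $\alpha=1$ contributes the value $1$, so replacing $\alpha>1$ by $\alpha\ge 1$ does not enlarge the sup).

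The remaining regime $\gamma=\infty$ is disposed of directly: for $k<n$ (the case $k=n$ gives ratio $1$ and is trivial) one has $m_{p_j}/\beta_j = (k + (n-k)/\beta_j)/n \to k/n < 1$, so $m_{p_j}^{p_j}/\beta_j^{p_j}\to 0$; hence the numerator and denominator of the ratio are both asymptotic to $k\beta_j^{p_j}$ and the ratio tends to $1$. Combining the three regimes with Lemma~\ref{A.B K_n lim inf } finishes the proof. I expect the main subtlety to be the subsequential bookkeeping and the careful disposal of the degenerate regimes $B_p\equiv 0$ and $\gamma=\infty$; the actual asymptotic computation is the clean application of Lemma~\ref{cgrlemalhopital} that identifies $\gamma^{k/n}$ with the parameter $\alpha$ appearing in Lemma~\ref{A.B K_n lim inf }.
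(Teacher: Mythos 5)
Your overall architecture coincides with the paper's: lower bound from Lemma~\ref{A.B K_n lim inf }, two-valued extremizers from Propositions~\ref{prop: existen extremizers} and~\ref{twovaluescomplete}, subsequence extraction with $k$ fixed, disposal of the degenerate value $0$, and Lemma~\ref{cgrlemalhopital} to identify $\lim m_{p_j}^{p_j}=\gamma^{k/n}$ in the bounded regime. All of that is fine. The genuine gap is in your treatment of the regime $\gamma=\infty$. You write that $m_{p_j}/\beta_j=(k+(n-k)/\beta_j)/n\to k/n<1$, but this presupposes $\beta_j\to\infty$, which is not implied by $\beta_j^{p_j}\to\infty$: for instance $\beta_j=1+p_j^{-1/2}$ gives $\beta_j^{p_j}\to\infty$ while $\beta_j\to 1$ and hence $m_{p_j}/\beta_j\to 1$. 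So the step as stated fails on a nonempty set of sequences, and it is precisely in that set (where $\beta_j\to1$ but $\beta_j^{p_j}\to\infty$) that the argument needs the most care, since the numerator term $(n-k)m_{p_j}^{p_j}$ could a priori remain comparable to $k\beta_j^{p_j}$.

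The conclusion you want ($(m_{p_j}/\beta_j)^{p_j}\to 0$ whenever $\beta_j^{p_j}\to\infty$) is still true, but it needs a quantitative comparison of the arithmetic mean to a fixed power of $\beta_j$ near $\beta_j=1$. This is exactly what the paper supplies: using the auxiliary function $g(x)=nx^{\frac{n-\frac12}{n}}-kx-(n-k)$, one checks $g\ge 0$ on $\bigl[1,\bigl(\tfrac{n-\frac12}{k}\bigr)^{2n}\bigr]$, whence $m_{p_j}=\tfrac{k\beta_j+n-k}{n}\le \beta_j^{1-\frac{1}{2n}}$ there, and therefore
\begin{equation*}
\Bigl(\frac{m_{p_j}}{\beta_j}\Bigr)^{p_j}\le \beta_j^{-\frac{p_j}{2n}}=\bigl(\beta_j^{p_j}\bigr)^{-\frac{1}{2n}}\longrightarrow 0 .
\end{equation*}
(One must also first show, as the paper does, that $\beta_j\to1$ --- via the observation that if $\beta_j\ge\rho>1$ along a subsequence then $m_{p_j}/\beta_j\le (k+(n-k)/\rho)/n<1$ and the ratio tends to $1$ --- so that $\beta_j$ eventually lies in the interval where $g\ge0$.) If you insert this two-step argument in place of your one-line limit, your proof is complete and matches the paper's.
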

\begin{proof}[Proof of Theorem \ref{A.B K_n thm}]
%We proceed as in Lemma \ref{star^{*}}.
 By the previous lemma we just need to prove that $$\lim \sup_{p\to \infty} \|M_{K_n}\|_{p}^p\le \underset{\alpha>1, k\in \{1,\dots,n\}}{\sup}\frac{k\alpha^{\frac{n}{k}}+\alpha(n-k)}{k\alpha^{\frac{n}{k}}+n-k}:=C_n.$$ Observe that $C_n>1 $ since $\alpha>1$. Moreover, by Proposition \ref{prop: existen extremizers} for all $p>1$ there exists a function $f_p:V\to\R$ such that $\|M_{K_n}\|_p=\frac{\|M_{K_n}f_p\|_p}{\|f_p\|_p}$. Let us assume that there exists a sequence $p_i\to \infty,$ such that: %there exists $f_{p_i}:V\to \mathbb{R}_{\ge 0}$  such that: 
\begin{equation}\label{thm 7 seq argument C_n}
\frac{\|M_{K_n}f_{p_i}\|_{p_{i}}^{p_{i}}}{\|f_{p_i}\|_{p_i}^{p_i}}>c,
\end{equation}
for a fixed constant $c>C_n$.
We assume without lose of generality that $f(a_1)\ge f(a_2)\dots \ge f(a_n)$.
By Proposition \ref{twovaluescomplete}, we know that $f_{p_i}$ only takes two values, if the minimum of this two values is $0$, after a normalization (if necessary) we could assume $f_{p_i}(a_j)=1$ for $j\le k_0<n$ and $f_{p_i}=0$ elsewhere, then 
\begin{align*}
\frac{\|M_{K_n}f_{p_i}\|_{p_i}^{p_i}}{\|f_{p_i}\|_{p_i}^{p_i}}=\frac{k_0+(n-k_0)(\frac{k_0}{n})^{p_i}}{k_0}\le 1+(n-1)\left(\frac{n-1}{n}\right)^{p_i}\to 1,    
\end{align*}
a contradiction for $p_i$ big enough. So we can assume without loss of generality  that $f_{p_i}$ takes two different positive values, and after a normalization, we can assume that the minimum value of $f_{p_i}$ is $1$. Let us call the other value by $y_{p_i}>1$. Let us take a subsequence of $p_i$ (that we also call $p_i$) such that $f_{p_i}(a_r)=y_{p_i}$ for $r\le k$ (for some fixed $k\in \{1,\dots,n\}$) and $f_{p_i}(a_r)=1$ elsewhere. We claim that $y_{p_i}\to 1.$ In fact, if there exist a subsequence (that we also call $p_i$) such that $y_{p_{i}}\ge \rho>,1$ we have $$\frac{m_{p_i}}{y_{p_i}}=\frac{k+(n-k)\frac{1}{y_{p_i}}}{n}\le \frac{k+(n-k)\frac{1}{\rho}}{n}<1.$$ Therefore \begin{align*}
\frac{\|M_{K_n}f_{p_i}\|_{p_i}^{p_i}}{\|f_{p_i}\|_{p_i}^{p_i}}=\frac{ky_{p_i}^{p_i}+(n-k)m_{p_i}^{p_i}}{ky_{p_i}^{p_i}+(n-k)}&\le 1+(n-k)\left(\frac{m_{p_i}}{y_{p_i}}\right)^{p_i}\\
&\le 1+(n-k)\left(\frac{k+(n-k)\frac{1}{\rho}}{n}\right)^{p_i}\to 1,      
\end{align*}
a contradiction. Now we claim that the $y_{p_i}^{p_i}$ are uniformly bounded. Assume that for a subsequence (that we also call $y_{p_i}$) we have $y_{p_i}^{p_i}\to \infty$. We consider the function $g(x)=nx^{\frac{n-\frac{1}{2}}{n}}-kx-(n-k)=0,$ we observe that $g(x)\ge 0$ for $x\in \left[1,\left(\frac{(n-\frac{1}{2})}{k}\right)^{2n}\right].$ In fact $g(1)=0$ and $g$ is increasing in  $\left[1,\left(\frac{\left(n-\frac{1}{2}\right)}{k}\right)^{2n}\right]$ since $g'(x)=\left(n-\frac{1}{2}\right)x^{\frac{-1}{2n}}-k\geq 0.$ Now, for $p_i$ big enough we have $y_{p_i}\in \left[1,\left(\frac{\left(n-\frac{1}{2}\right)}{k}\right)^{2n}\right]$. Thus $ny_{p_i}^{\frac{n-\frac{1}{2}}{n}}-ky_{p_i}-(n-k)\ge 0$ and then $$m_{p_i}=\frac{ky_{p_i}+n-k}{n}\le y_{p_i}^{\frac{n-\frac{1}{2}}{n}}.$$ Therefore \begin{align*}
\frac{\|M_{K_n}f_{p_i}\|_{p_i}^{p_i}}{\|f_{p_i}\|_{p_i}^{p_i}}=\frac{ky_{p_i}^{p_i}+(n-k)m_{p_i}^{p_i}}{ky_{p_i}^{p_i}+(n-k)}&\le 1+(n-k)\left(\frac{m_{p_i}}{y_{p_i}}\right)^{p_i}\\
&\le 1+(n-k)\left(y_{p_i}^{-\frac{1}{2n}}\right)^{p_i}\to 1, 
\end{align*}   
reaching a contradiction. So, we have that $y_{p_i}^{p_i}$ are uniformly bounded. Let us take a subsequence of $p_i$ (that we also denote $p_i$ for simplicity) such that $y_{p_i}^{p_i}$ and $m_{p_i}^{p_i}$ converges. Let us write $\underset{p_i\to \infty}{\lim} y_{p_i}^{p_i}=\alpha_1$ and $\underset{p_i\to \infty}{\lim} m_{p_k}^{p_k}=\alpha_2$. Then, by Lemma \ref{cgrlemalhopital} we have (taking $x_{s,p_k}=y_{p_k}$ for $s\le k$ and $x_{s,p_k}=1$ for $s>1$) $\alpha_2=\alpha_1^{\frac{k}{n}}.$

%By AM-GM we have $m_{p_k}\ge y_{p_k}^{\frac{k}{n}}$ and therefore $a_2\ge a_1^{\frac{k}{n}}$. We claim that $a_2=a_1^{\frac{k}{n}}$. Assume that $a_2>ca_{1}^{\frac{k}{n}}$ for $c>1$. Then we have $\lim_{p_i\to \infty} \left(\frac{m_{p_i}}{y_{p_i}^{\frac{k}{n}}}\right)^{p_i}=\frac{a_2}{a_1^{\frac{k}{n}}}>c>1,$ so for $i$ big enough $\frac{ky_{p_i}+(n-k)}{n}\ge y_{p_i}^{\frac{k}{n}}c^{\frac{1}{p_i}}$. Now let us observe that there exists an interval $[1,1+\varepsilon]$ such that there exists a constant $\beta_{n,k,\varepsilon}>0$ such that
%\begin{align*}
   % ku^{n}-nu^{k}+(n-k)\le \beta_{n,k,\varepsilon}(u-1)^{2}
%\end{align*}
%in the interval $[1,1+\varepsilon]$, given that $h(u)=ku^{n}-ku^{k}+(n-k)$ has a zero of order $2$ at $1$. Then, since $ky_{p_i}- ny_{p_i}^{\frac{k}{n}}c^{\frac{1}{p_i}}+n-k\ge 0, $ we have \begin{align*}
%\beta_{n,k,\varepsilon}(y_{p_i}^{\frac{1}{n}}-1)^{2}\ge ky_{p_i}-ny_{p_i}^{\frac{k}{n}}+n-k\ge ny_{p_i}^{\frac{k}{n}}(c^{\frac{1}{p_i}}-1)\ge c^{\frac{1}{p_i}}-1.\end{align*}
%Therefore $$y_{p_i}^{\frac{p_i}{n}}\ge (1+\frac{1}{\sqrt{\beta_{n,k,\epsilon}}}\sqrt{c^{\frac{1}{p_i}}-1})^{p_i}\ge (1+\frac{1}{\sqrt{\beta_{n,k,\epsilon}}}(\frac{\sqrt{\log{c}}}{\sqrt{p_i}}))^{p_i}\to \infty,$$ reaching a contradiction. Then $a_2=a_1^{\frac{k}{n}},$ as we claim. 
This implies
\begin{align*}
\lim_{p_i\to \infty}\frac{\|M_{K_n}f_{p_i}\|_{p_i}^{p_i}}{\|f_{p_i}\|_{p_i}^{p_i}}=\lim_{p_i\to \infty}\frac{ky_{p_i}^{p_i}+(n-k)m_{p_i}^{p_i}}{ky_{p_i}^{p_i}+(n-k)}=\frac{k\alpha_2^{\frac{n}{k}}+(n-k)\alpha_2}{k\alpha_{2}^{\frac{n}{k}}+(n-k)}\leq C_n.
\end{align*}
Then, it is not possible to have a sequence like in \eqref{thm 7 seq argument C_n}, therefore
$$\underset{p\to \infty}{\lim \sup}\ \|M_{K_n}\|_{p}^p\le C_n$$
as desired.%from where we conclude.
\end{proof}
Now we start analyzing the behavior of $\|M_{S_n}\|_{p}^p$ when $p$ goes to $\infty$. In the following lemma we construct an example that helps us to achieve this goal. 
\begin{lemma}\label{lemma assymp S_n lower bound}
Let $n\ge 3$ and let $S_n=(V,E)$ be the star graph with $n$ vertices $V=\{a_1,a_2,\dots,a_n\}$ with center at $a_1$. Then, $$\underset{p\to \infty}{\lim \inf}\ \|M_{S_n}\|_{p}^{p}\ge \frac{1+\sqrt{n}}{2}.$$
\end{lemma}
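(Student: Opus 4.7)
My plan is to mirror the lower-bound strategy used for $K_n$ in Lemma~\ref{A.B K_n lim inf }: exhibit a one-parameter family of test functions on $S_n$ whose normalized $p$-th power ratio tends, as $p\to\infty$, to $\frac{1+\sqrt{n}}{2}$, and then optimize the parameter. The natural candidate is $f_p(a_1)=\alpha^{1/p}$ at the center and $f_p(a_j)=1$ at each leaf $a_j$ for $j=2,\dots,n$, where $\alpha>1$ will be pinned down at the end.

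Since $\alpha^{1/p}>1$ for every $p>0$, an elementary comparison shows that the center attains the maximum value of $f_p$ and that the two-vertex average $\frac{\alpha^{1/p}+1}{2}$ dominates the global mean $m_p=\frac{\alpha^{1/p}+n-1}{n}$ (the relevant inequality reduces to $y>1$ when $n\geq 3$). Therefore
$$M_{S_n}f_p(a_1)=\alpha^{1/p}, \qquad M_{S_n}f_p(a_j)=\frac{\alpha^{1/p}+1}{2} \quad (j\geq 2),$$
and consequently
$$\frac{\|M_{S_n}f_p\|_p^p}{\|f_p\|_p^p}=\frac{\alpha+(n-1)\left(\frac{\alpha^{1/p}+1}{2}\right)^p}{\alpha+(n-1)}.$$
Applying Lemma~\ref{cgrlemalhopital} to $x_{1,p}=\alpha^{1/p}$ and $x_{2,p}=1$ yields $\bigl(\tfrac{\alpha^{1/p}+1}{2}\bigr)^p\to\sqrt{\alpha}$, so
$$\liminf_{p\to\infty}\|M_{S_n}\|_p^p\geq \frac{\alpha+(n-1)\sqrt{\alpha}}{\alpha+(n-1)}.$$

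The remaining step is to maximize the right-hand side over $\alpha>1$. With the substitution $u=\sqrt{\alpha}$ the ratio becomes $g(u)=\frac{u^2+(n-1)u}{u^2+(n-1)}$, whose only critical point in $(1,\infty)$ solves $u^2-2u-(n-1)=0$, giving $u=1+\sqrt{n}$, i.e.\ $\alpha=(1+\sqrt{n})^2$. A quick factorization of numerator and denominator then gives $g(1+\sqrt{n})=\frac{1+\sqrt{n}}{2}$, completing the argument. I do not foresee any real difficulty in this lemma: the only care needed is the elementary verification of the closed form of $M_{S_n}f_p$ (immediate from $\alpha^{1/p}>1$) and the correct application of Lemma~\ref{cgrlemalhopital}. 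The genuinely hard work should lie in the matching upper bound, which presumably will be addressed in the following theorem.
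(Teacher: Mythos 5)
Your proposal is correct and follows essentially the same route as the paper: a test function equal to $1$ on the leaves and slightly larger at the center, calibrated so the relevant $p$-th powers converge, followed by optimization of the limiting ratio. The only difference is a reparametrization (your $\alpha$ is the paper's $k^2$; the paper normalizes the leaf maximal value to be exactly $k$ and lets the center power tend to $k^2$, while you normalize the center power to be exactly $\alpha$ and let the leaf power tend to $\sqrt{\alpha}$), which changes nothing of substance.
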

\begin{proof} 
For fixed $k>1$ we define $y_{k,p}=2k^{\frac{1}{p}}-1,$ we observe that $\left(\frac{1+y_{k,p}}{2}\right)^p=k.$ Let us consider the function $f_{k,p}:V\to \mathbb{R}_{\ge 0}$ by $f_{k,p}(a_1)=y_{k,p}$ and $f_{k,p}(a_i)=1$ for $i>1$. Then, we have
\begin{align*}
\|M_{S_n}\|_{p}^p\ge \left(\frac{\|M_{S_n}f_{k,p}\|_{p}}{\|f_{k,p}\|_{p}}\right)^p=\frac{\left(2k^{\frac{1}{p}}-1\right)^p+(n-1)k}{\left(2k^{\frac{1}{p}}-1\right)^p+(n-1)}. 
\end{align*}
We observe that by L'Hospital $\underset{p\to \infty}{\lim}\left(2k^{\frac{1}{p}}-1\right)^p=k^2,$ therefore we have 
$$\underset{p\to \infty}{\lim \inf}\ \|M_{S_n}\|_{p}^{p}\ge \frac{k^2+(n-1)k}{k^2+(n-1)}.$$
By taking $k=\sqrt{n}+1$ we have $\frac{k^2+(n-1)k}{k^2+(n-1)}=\frac{\sqrt{n}+1}{2},$
from where we conclude our proposition.
\end{proof}
We observe that the proof above gives us the estimate $$\|M_{S_n}\|_{p}^{p}\ge \frac{\left(2(1+\sqrt{n})^{\frac{1}{p}}-1\right)^{p}+(n-1)(1+\sqrt{n})}{\left(2(1+\sqrt{n})^{\frac{1}{p}}-1\right)^{p}+(n-1)}$$ for every $p\ge 1.$ Moreover, we observe that $\left(2(1+\sqrt{n})^{\frac{1}{p}}-1\right)^{p}$ is an increasing function on $p$. This is the case because the derivative of $p\log\left(2(1+\sqrt{n})^{\frac{1}{p}}-1\right)$ is $$\log\left(2(1+\sqrt{n})^{\frac{1}{p}}-1\right)-\frac{(1+\sqrt{n})^{\frac{1}{p}}\log(1+\sqrt{n})}{(2(1+\sqrt{n})^{\frac{1}{p}}-1)p}\ge \log(2(1+\sqrt{n})^{\frac{1}{p}}-1)-\frac{\log(1+\sqrt{n})}{p}\ge 0.$$ Thus, we have that
$$\frac{\left(2(1+\sqrt{n})^{\frac{1}{p}}-1\right)^{p}+(n-1)(1+\sqrt{n})}{\left(2(1+\sqrt{n})^{\frac{1}{p}}-1\right)^{p}+(n-1)}$$ is decreasing with respect to $p$. Then $$\|M_{S_n}\|_{p}^{p}\ge \lim_{t\to \infty} \frac{\left(2(1+\sqrt{n})^{\frac{1}{t}}-1\right)^{t}+(n-1)(1+\sqrt{n})}{\left(2(1+\sqrt{n})^{\frac{1}{t}}-1\right)^{t}+(n-1)}=\frac{\sqrt{n}+1}{2},$$ for all $p\ge 1.$
Note that this lower bound is better than the one observed by Soria and Tradacete  $1+\frac{n-1}{2^p}\le\|M_{S_n}\|_{p}^p$ (see Proposition 3.4 in \cite{SORIA})  whenever $p>\frac{\log(\sqrt{n}+1)}{\log(2)}+1.$ \\

Let us define $$\|M_{S_{n}}\|_{p}^{*}:=\sup_{y\ge 1}\left(\frac{y^p+(n-1)(\frac{1+y}{2})^p}{y^p+(n-1)}\right)^{\frac{1}{p}}.$$ Our next goal is to analyze the relation between this object and $\|M_{S_n}\|_p$. We start observing that by definition $\|M_{S_{n}}\|_{p}^{*}\le \|M_{S_n}\|_{p}$. Also, we have the following.

\begin{lemma}\label{star^{*}}
Let $n\ge 3$. The following identity holds $$\lim_{p\to \infty} \left(\|M_{S_{n}}\|_{p}^{*}\right)^{p}=\frac{1+\sqrt{n}}{2}.$$
\end{lemma}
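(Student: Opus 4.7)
The plan is to establish matching upper and lower bounds on $(\|M_{S_{n}}\|_{p}^{*})^{p} = \sup_{y\ge 1} F_p(y)$, where
$$F_p(y) := \frac{y^p + (n-1)\left(\frac{1+y}{2}\right)^p}{y^p + n - 1}.$$
For the lower bound, plugging $y = 2(1+\sqrt{n})^{1/p} - 1$ into the supremum recovers exactly the expression appearing in the remark after the preceding lemma, which we have already observed converges to $\frac{1+\sqrt{n}}{2}$ as $p\to\infty$. This immediately gives $\liminf_{p\to\infty}(\|M_{S_{n}}\|_{p}^{*})^p \ge \frac{1+\sqrt{n}}{2}$.

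For the upper bound, I would take near-maximizers $y_p\ge 1$ such that $F_p(y_p)$ realizes $\limsup_{p\to\infty}(\|M_{S_{n}}\|_{p}^{*})^p$ along a subsequence. Writing
$$F_p(y_p) = \frac{1 + (n-1)\left(\frac{1+y_p}{2y_p}\right)^p}{1 + (n-1)y_p^{-p}},$$
I first argue that $y_p$ may be assumed bounded: if $y_p\to\infty$, then $(1+y_p)/(2y_p)\to 1/2$ and $y_p^{-p}\to 0$, so $F_p(y_p)\to 1$. Passing to a convergent subsequence $y_p\to y_\infty \in [1,\infty)$, the case $y_\infty>1$ is dispatched identically since then $(1+y_\infty)/(2y_\infty)<1$ forces both terms of the rewritten numerator and denominator to vanish. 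So the nontrivial situation is $y_\infty=1$.

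Writing $y_p = 1 + \eta_p$ with $\eta_p\to 0^+$, I extract a further subsequence so that $p\eta_p\to \xi\in[0,\infty]$. If $\xi<\infty$, standard Taylor expansions give $y_p^p=(1+\eta_p)^p\to e^\xi$ and $((1+y_p)/2)^p=(1+\eta_p/2)^p\to e^{\xi/2}$, so $F_p(y_p)\to g(e^{\xi/2})$ where
$$g(k):=\frac{k^2 + (n-1)k}{k^2 + n - 1}.$$
A short calculus step gives $g'(k)\propto 2k+(n-1)-k^2$, whose unique root in $[1,\infty)$ is $k=1+\sqrt{n}$, and direct simplification yields $g(1+\sqrt{n})=\frac{1+\sqrt{n}}{2}$. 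If $\xi=\infty$, the expansion $\log((1+y_p)/(2y_p)) = -\eta_p/2 + O(\eta_p^2)$ gives $((1+y_p)/(2y_p))^p = \exp(-p\eta_p/2 + o(p\eta_p))\to 0$ while $y_p^p\to\infty$, so the rewritten $F_p$ shows $F_p(y_p)\to 1$. Collecting all cases, $\limsup_{p\to\infty}(\|M_{S_{n}}\|_{p}^{*})^p \le \max_{k\ge 1} g(k) = \frac{1+\sqrt{n}}{2}$.

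The main obstacle is recognizing that the supremum concentrates on the critical scale $\eta_p \asymp 1/p$, which reduces the asymptotic problem to the one-variable optimization of $g$ that matches exactly the test function used in the lower bound. The subcase $\xi=\infty$ is slightly subtle because $(1+y_p)/(2y_p)\to 1$ to leading order, so one must retain the $-\eta_p/2$ correction in the Taylor expansion of the logarithm in order to see that the second term in the numerator of $F_p$ remains negligible compared to $y_p^p$.
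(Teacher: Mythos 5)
Your proof is correct and follows essentially the same route as the paper: a test function for the lower bound, and for the upper bound a subsequence argument showing that near-maximizers satisfy $y_p\to 1$ with $y_p^p$ bounded, which reduces the limit to maximizing $k\mapsto \frac{k^2+(n-1)k}{k^2+n-1}$ over $k\ge 1$. The only cosmetic difference is that you obtain the relation $\alpha_2=\sqrt{\alpha_1}$ (i.e.\ $\left(\tfrac{1+y_p}{2}\right)^p\to e^{\xi/2}$ when $y_p^p\to e^{\xi}$) via an explicit Taylor expansion in the parameter $\xi=\lim p(y_p-1)$, whereas the paper invokes its AM--GM/L'H\^opital Lemma \ref{cgrlemalhopital}.
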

\begin{proof}
We start observing that the proof of Lemma \ref{lemma assymp S_n lower bound} also works for $\|M_{S_n}\|^*_p$. Then, it is enough to prove that $$\underset{p\to \infty}{\lim \sup}\ (\|M_{S_n}\|_{p}^{*})^{p}\le \frac{1+\sqrt{n}}{2}.$$ Let us assume that there exists a sequence $p_k\to \infty$ and $y_{p_k}>1$ such $$\frac{y_{p_k}^{p_k}+(n-1)\left(\frac{1+y_{p_k}}{2}\right)^{p_k}}{y_{p_k}^{p_k}+(n-1)}\ge c>\frac{1+\sqrt{n}}{2}.$$
We observe that $y_{p_k}\to 1$. In fact if there exists a subsequence $k_j$ such that $y_{p_{k_j}}\ge \rho>1$, we have $\frac{1+y_{p_{k_j}}}{2y_{p_{k_j}}}\le \frac{1}{2\rho}+\frac{1}{2}<1,$ then 
\begin{align*}
\frac{y_{p_{k_j}}^{p_{k_j}}+(n-1)\left(\frac{1+y_{p_{k_j}}}{2}\right)^{p_{k_j}}}{y_{p_{k_j}}^{p_{k_j}}+(n-1)}\le 1+(n-1)\left(\frac{1}{2\rho}+\frac{1}{2}\right)^{p_{k_j}}\to 1. 
\end{align*}
Therefore, this cannot be the case. 
Now we prove that the $y_{p_k}^{p_k}$ are uniformly bounded. In fact, since $y_{p_k}<2$ for $k$ big enough, we have $y_{p_k}^{\frac{3}{4}}\ge \frac{y_{p_k}+1}{2},$ since $2x^{\frac{3}{4}}-x-1\ge 0$ for $x\in [1,2]$. Therefore, if $y_{p_{k_j}}^{p_{k_j}}\to \infty$ we have $$\frac{y_{p_{k_j}}^{p_{k_j}}+(n-1)(\frac{1+y_{p_{k_j}}}{2})^{p_{k_j}}}{y_{p_{k_j}}^{p_{k_j}}+(n-1)}\le \frac{y_{p_{k_j}}^{p_{k_j}}+(n-1)(y_{p_{k_j}})^{\frac{3p_{k_j}}{4}}}{y_{p_{k_j}}^{p_{k_j}}+(n-1)}\to 1.$$
So, we have that $y_{p_k}^{p_k}$ are uniformly bounded. Let us take a subsequence of $\{p_k\}_{k\in \mathbb{N}}$ (that we also denote $\{p_k\}_{k\in \mathbb{N}}$) such that $y_{p_k}^{p_k}$ and $\left(\frac{1+y_{p_k}}{2}\right)^{p_k}$ converges. Let us write $\underset{k\to \infty}{\lim} y_{p_k}^{p_k}=\alpha_1$ and $\underset{k\to \infty}{\lim} \left(\frac{1+y_{p_k}}{2}\right)^{p_k}=\alpha_2$. By Lemma \ref{cgrlemalhopital} (with $n=2$, $x_{1,p_k}=y_{p_k}$ and $x_{2,p_k}=1$) we have $\alpha_2=\sqrt{\alpha_1}$. %We have that $\frac{1+y_{p_k}}{2}\ge \sqrt{y_{p_k}},$ therefore $a_2\ge \sqrt{a_1}.$ We claim that $a_2=\sqrt{a_1}$. Assume that $a_2>c\sqrt{a_1}$ for $c>1$. Then we have $\lim_{k\to \infty} (\frac{1+y_{p_k}}{2\sqrt{y_{p_k}}})^{p_k}=\frac{a_2}{\sqrt{a_1}}>c>1,$ so for $k$ big enough $1+y_{p_k}\ge 2\sqrt{y_{p_k}}c^{\frac{1}{p_k}}.$ Then $y_{p_k}\ge -1 + 2 c^{\frac{2}{p_k}} + 2 \sqrt{c^{\frac{2}{p_k}} (-1 + c^{\frac{2}{p_k}})},$ but $\lim_{k\to \infty} (-1 + 2 c^{\frac{2}{p_k}} + 2 \sqrt{c^{\frac{2}{p_k}} (-1 + c^{\frac{2}{p_k}})})^{p_k}=\infty,$ contradicting the fact that $y_{p_k}^{p_k}$ is bounded. This limit diverges because $$-1 + 2 c^{\frac{2}{p_k}} + 2 \sqrt{c^{\frac{2}{p_k}} (-1 + c^{\frac{2}{p_k}})}=(c^{\frac{1}{p_k}}+\sqrt{c^{\frac{2}{p_k}}-1})^2\ge  (1+\sqrt{c^{\frac{2}{p_k}}-1})^{2},$$ 
%also $\sqrt{c^{\frac{2}{p_k}}-1}\ge \sqrt{\frac{2\log{c}}{p_k}}$ and $\lim_{x\to \infty}(1+\frac{\sqrt{2\log{c}}}{\sqrt{x}})^{x}=\infty,$ by elementary calculus.
%Then we have $a_2=\sqrt{a_1}.$ 
Then, observe that 
\begin{align*}
\lim_{k\to \infty}\frac{y_{p_k}^{p_k}+(n-1)(\frac{1+y_{p_k}}{2})^{p_k}}{y_{p_k}^{p_k}+(n-1)(\frac{1+y_{p_k}}{2})^{p_k}}=\frac{\alpha_2^2+(n-1)\alpha_2}{\alpha_2^2+(n-1)}\le \frac{\sqrt{n}+1}{2},    
\end{align*}
since this last inequality is equivalent to $\alpha_2^2(\sqrt{n}-1)-2(n-1)\alpha_2+(n-1)(\sqrt{n}+1)\ge 0,$ and this is true because $\alpha_2^2(\sqrt{n}-1)-2(n-1)\alpha_2+(n-1)(\sqrt{n}+1)=(\sqrt{n}-1)(\alpha_2-(\sqrt{n}+1))^2.$ This conclude the proof. 
\end{proof}

We conclude this section describing the asymptotic behavior of $\|M_{S_n}\|_{p}^{p}$ as $p\to\infty$. 
\begin{theorem}
Fix $n\in \mathbb{N}$. Let $S_n=(V,E)$ be the star graph with $n$ vertices $V=\{a_1,a_2,\dots,a_n\}$ with center at $a_1$. For $n\ge 25$ we have
\begin{align*}
    \lim_{p\to \infty}\|M_{S_n}\|_{p}^{p}=\frac{1+\sqrt{n}}{2}.
\end{align*}
\end{theorem}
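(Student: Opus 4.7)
The lower bound $\liminf_{p\to\infty}\|M_{S_n}\|_p^p\geq (1+\sqrt n)/2$ is already provided by Lemma~\ref{lemma assymp S_n lower bound}, so the task is the matching upper bound under the hypothesis $n\geq 25$. The plan is a proof by contradiction: assume there exist $c>(1+\sqrt n)/2$ and a sequence $p_k\to\infty$ along which extremizers $f_k$ (which exist by Proposition~\ref{prop: existen extremizers}) achieve $\|M_{S_n}f_k\|_{p_k}^{p_k}/\|f_k\|_{p_k}^{p_k}\geq c$. By Proposition~\ref{threevaluesstar} and a normalization, assume $f_k(a_1)=1=\max f_k$, with the leaves taking the two values $x_k\leq y_k\leq 1$ at $s$ and $t$ vertices respectively; passing to a subsequence fix $s+t=n-1$.

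The first step is to reduce to a "mixed" configuration. If $x_k=y_k$, then after the substitution $y\mapsto 1/y$ the ratio matches the profile defining $\|M_{S_n}\|_p^{*}$, whose limit is $(1+\sqrt n)/2$ by Lemma~\ref{star^{*}}---contradicting $\geq c$. If instead $x_k<y_k$ but both values lie on the same side of $2m_k-1$ (where $m_k$ is the mean of $f_k$), the Minkowski argument from Step~3 in the proof of Proposition~\ref{threevaluesstar} coalesces them into a single value without decreasing the ratio, returning to the diagonal case. We may therefore assume $s,t\geq 1$ and $x_k<2m_k-1<y_k$; in this regime $M_{S_n}f_k$ equals $1$ at $a_1$, $(1+y_k)/2$ on the $y$-leaves, and $m_k$ on the $x$-leaves.

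The first-order optimality of $(x_k,y_k)$ yields
\begin{align*}
M_k^{p_k}y_k^{p_k-1}&=\tfrac12\Bigl(\tfrac{1+y_k}{2}\Bigr)^{p_k-1}+\tfrac{s}{n}m_k^{p_k-1},\\
M_k^{p_k}x_k^{p_k-1}&=\tfrac{s}{n}m_k^{p_k-1},
\end{align*}
with $M_k:=\|M_{S_n}\|_{p_k}$. Together with the crude bound $M_k^{p_k}\leq n$ (numerator $\leq n$, denominator $\geq 1$), the first identity forces $((1+y_k)/(2y_k))^{p_k-1}\leq 2n$, whence $y_k\to 1$; the second identity then similarly gives $x_k\to 1$. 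After passing to a subsequence set $\alpha_1=\lim y_k^{p_k}\in[0,1]$ and $\alpha_2=\lim x_k^{p_k}\in[0,\alpha_1]$; by Lemma~\ref{cgrlemalhopital}, $((1+y_k)/2)^{p_k}\to\sqrt{\alpha_1}$ and $m_k^{p_k}\to\mu:=(\alpha_2^s\alpha_1^t)^{1/n}$. Taking limits in the two FOCs gives $M_\infty\alpha_1=\tfrac12\sqrt{\alpha_1}+\tfrac{s}{n}\mu$ and $M_\infty\alpha_2=\tfrac{s}{n}\mu$, whose difference is $M_\infty(\alpha_1-\alpha_2)=\tfrac12\sqrt{\alpha_1}$.

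Setting $\beta=\sqrt{\alpha_1}$, substitution into the limiting ratio formula and simplification produces the closed expression
$$M_\infty=\frac{2+(t-1)\beta}{2(1-\beta^2)},$$
while the consistency between $\mu=(\alpha_2^s\alpha_1^t)^{1/n}$ and $\mu=(n/s)M_\infty\alpha_2$ yields an algebraic equation linking $\beta$, $s$, $t$, and $n$. The main obstacle is the final step: to verify that, for $n\geq 25$ and every admissible $s\in\{1,\ldots,n-2\}$, this algebraic constraint forces $\beta$ to lie below the threshold at which $M_\infty=(1+\sqrt n)/2$, thereby contradicting $M_\infty\geq c$. The specific threshold $n\geq 25$ arises precisely from this bookkeeping---comparing the maximum of the displayed formula (subject to the consistency constraint) against $(1+\sqrt n)/2$ uniformly in $s$; for smaller $n$ a potential solution of the constraint could push $M_\infty$ above the target, so the argument would need refinement below that threshold.
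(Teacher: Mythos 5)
Your argument coincides with the paper's up to the compactness stage: both take extremizers along $p_k\to\infty$, reduce via Proposition \ref{threevaluesstar} (and Lemma \ref{star^{*}} for the single-leaf-value case) to the mixed three-value profile, show $y_{p_k}^{p_k}$ stays bounded, and pass to subsequential limits governed by Lemma \ref{cgrlemalhopital}. From there you take a genuinely different route: first-order optimality in the two leaf values, leading to the closed form $M_\infty=\frac{2+(t-1)\beta}{2(1-\beta^2)}$ (which I checked and is correct) plus a consistency equation from $\mu=(\alpha_2^s\alpha_1^t)^{1/n}=\frac{n}{s}M_\infty\alpha_2$. The paper never differentiates; it instead extracts the structural inequality $n\le 2s+1$ (in your notation, $t\ge\frac{n-1}{2}$) directly from $m_{p_k}>\frac{x_{p_k}+y_{p_k}}{2}$, and then bounds the limiting ratio by splitting on whether $\alpha_2$ exceeds $(6/5)^n$ and checking a quadratic discriminant --- which is exactly where the hypothesis $n\ge 25$ is used.

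The genuine gap is the step you yourself label ``the main obstacle'': the analysis of the consistency constraint is never carried out, and without it nothing is proved. Your formula $\frac{2+(t-1)\beta}{2(1-\beta^2)}$ diverges as $\beta\to 1^-$, so the entire content of the upper bound is the claim that the transcendental constraint linking $\beta$, $s$, $t$, $n$ keeps $\beta$ uniformly away from the bad region for every admissible $s$ once $n\ge 25$; that claim is asserted, not established, and it is precisely the place where the threshold $25$ must be produced. Two smaller omissions: the first-order conditions hold with equality only at interior critical points, so the boundary cases $y_k=1$ and $x_k=0$ need one-sided versions (both are easy, e.g.\ $y_k=1$ forces $M_k^{p_k}\le\frac12+\frac{s}{n}<\frac{1+\sqrt n}{2}$, but they are not addressed); and you derive no lower bound on $t$, whereas the paper's $t\ge\frac{n-1}{2}$ is the structural input that closes its final estimate --- if your constraint is to do the same job, that information has to be extracted from it. As written, the proposal is a correct reduction followed by an unproved endgame, so it does not yet constitute a proof of the theorem; the most economical repair is to import the paper's derivation of $n\le 2s+1$ and its two-case estimate in place of the unfinished constraint analysis.
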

\begin{proof}
We choose $f_p:V\to \mathbb{R}_{\ge 0}$ such that $\frac{\|M_{S_n}f_p\|_{p}^p}{\|f_p\|_{p}^p}=\|M_{S_n}\|_{p}^p$. First we observe that for all $p$ sufficiently large we have $1+\frac{n-1}{2^p}<\frac{\sqrt{n}+1}{2}$. Then, by Proposition \ref{threevaluesstar}, we can assume that $f_{p}(a_1)=y_p\ge 1=f_{p}(a_2)=f_{p}(a_{3})=\dots =f_{p}(a_{s+1})\ge x_p=f_{p}(a_{s+2})=\dots=f_{p}(a_n)$.
Moreover, we assume that $\frac{y_p+x_p}{2}<\frac{\sum_{i=1}^nf_{p}(a_i)}{n}=:m_p$ (otherwise, we would have that $x_p\geq 2m_p-f_p(a_1)$, and we can proceed as in  the Step 3 of the proof of Proposition \ref{threevaluesstar} to conclude that $x_p=1$). Notice that the case $s=1$ is not possible since then $\frac{x_p+y_p}{2}\ge \frac{y_{p}+(n-1)x_p}{n}=m_p$. Let us assume that there exists $1<s<n-1$ and a sequence $p_k\to \infty$ such that 
\begin{align*}
   \frac{\|M_{S_n}f_{p_k}\|_{p_k}^{p_k}}{\|f_{p_k}\|_{p}^p}=\frac{y_{p_k}^{p_k}+s\left(\frac{1+y_{p_k}}{2}\right)^{p_k}+(n-s-1)m_{p_k}^{p_k}}{y_{p_k}^{p_k}+s+(n-s-1)x_{p_k}^{p_k}}> \frac{1+\sqrt{n}}{2}.
\end{align*}
First we observe that $y_{p_k}\to 1$. If not, there exists a subsequence of $\{p_k\}_{k\in \mathbb{N}}$ (that we also call $\{p_k\}_{k\in \mathbb{N}}$) such that $y_{p_k}>\rho>1$. Then $$\frac{m_{p_k}}{y_{p_k}}\le \frac{y_{p_k}+1}{2y_{p_k}}\le \frac{1}{2}+\frac{1}{2\rho}<1$$ and therefore $$\frac{y_{p_k}^{p_k}+s\left(\frac{1+y_{p_k}}{2}\right)^{p_k}+(n-s-1)m_{p_k}^{p_k}}{y_{p_k}^{p_k}+s+(n-s-1)x_{p_k}^{p_k}}\to 1,$$ a contradiction. Now we claim that the $y_{p_k}^{p_k}$ are uniformly bounded. Assume that for a subsequence (that we also call $\{y_{p_k}\}_{k\in \mathbb{N}}$) we have $y_{p_k}^{p_k}\to \infty$. First observe that for $k$ big enough we have $y_{p_k}>1$. Then, as in the proof of Lemma \ref{star^{*}}, we have that for $p_k$ big enough $$y_{p_k}^{\frac{3}{4}}\ge \frac{y_{p_k}+1}{2}\ge m_{p_k},$$ from where we have 
$$\frac{\|M_{S_n}f_{p_k}\|_{p_k}^{p_k}}{\|f_{p_k}\|_{p_k}^{p_k}}\le \frac{y_{p_k}^{p_k}+(n-1)y_{p_k}^{\frac{3p_k}{4}}}{y_{p_k}^{p_k}+s+(n-s-1)x_{p_k}^{p_k}}\to 1.$$ Reaching a contradiction. Therefore we can take a subsequence (that we also call $\{p_k\}_{k\in \mathbb{N}}$) such that $\underset{k\to \infty}{\lim} y_{p_k}^{p_k}=\alpha_1$, $\underset{k\to \infty}{\lim} \left(\frac{1+y_{p_k}}{2}\right)^{p_k}=\alpha_2$, $\underset{k\to \infty}{\lim} m_{p_k}^{p_k}=\alpha_3$ and $\underset{k\to \infty}{\lim} x_{p_k}^{p_k}=\alpha_4$. By Lemma \ref{cgrlemalhopital} we have that $\alpha_2=\sqrt{\alpha_1}$ and $\alpha_3=\alpha_1^{\frac{1}{n}}\alpha_4^{\frac{n-s-1}{n}}.$ Therefore $$\lim_{k\to \infty}\frac{\|M_{S_n}f_{p_k}\|_{p_k}^{p_k}}{\|f_{p_k}\|_{p_k}^{p_k}}\le \frac{\alpha_{2}^2+s\alpha_2+(n-s-1)\alpha_{1}^{\frac{1}{n}}\alpha_{4}^{\frac{n-s-1}{n}}}{\alpha_{2}^{2}+s+(n-s-1)\alpha_{4}},$$ we claim that this last expression is bounded above by $\frac{\sqrt{n}+1}{2}$ in our setting, from where we would conclude. 
%Let us consider the expression $$f(\alpha_2,\alpha_4,s):=\left(\frac{\sqrt{n}-1}{2}\right)\alpha_{2}^2-s\alpha_2+s\left(\frac{\sqrt{n}+1}{2}\right)+(n-s-1)\left(\left(\frac{\sqrt{n}+1}{2}\right)\alpha_4-\alpha_4^{\frac{n-s-1}{n}}\alpha_2^{\frac{2}{n}}\right),$$ we claim that this function is positive for $\alpha_2\ge 1$ and $0\le \alpha_4\le 1$. First one can check that is true for $f(\alpha_2,0,s)$ and $g(\alpha_2,1,s)$. In fact, the first is a consequence of the fact that $\left(\frac{\sqrt{n}-1}{2}\right)\alpha_{2}^2-s\alpha_2+s\left(\frac{\sqrt{n}+1}{2}\right)$ has no real root with respect to $\alpha_2$ (its discriminant is $s^2-s(n-1)<1$) and the second is a consequence of $$\frac{\alpha_2^2+s\alpha_2+(n-s-1)\alpha_2^{\frac{2}{n}}}{\alpha_2^2+n-1}\le \frac{\alpha_2^2+(n-1)\alpha_2}{\alpha_2^2+n-1}\le \frac{\sqrt{n}+1}{2}.$$ Therefore is enough to prove $g(\alpha_2,\alpha_4,s)\ge 0$ when $\frac{\partial g(\alpha_2,\alpha_4,s)}{\partial \alpha_4}=0$. Since  $$\frac{\partial g(\alpha_2,\alpha_4,s)}{\partial \alpha_4}=(n-s-1)\left(\left(\frac{\sqrt{n}+1}{2}\right)-\frac{n-s-1}{n}\alpha_4^{\frac{-s-1}{n}}\alpha_2^{\frac{2}{n}}\right),$$
%we can assume that %$$\alpha_4=\left(\frac{2(n-s-1)}{n(\sqrt{n}+1)}\right)^{\frac{n}{s+1}}\alpha_2^{\frac{2}{s+1}}.$$
%We observe now that $\alpha_{4}^{\frac{-s-1}{n}}\alpha_{2}^{\frac{2}{n}}=\left(\frac{2(n-s-1)}{n(\sqrt{n}+1)}\right)^{-1}$ and therefore $$\left(\left(\frac{\sqrt{n}+1}{2}\right)\alpha_4-\alpha_4^{\frac{n-s-1}{n}}\alpha_2^{\frac{2}{n}}\right)=\alpha_4(\left(\frac{\sqrt{n}+1}{2}\right)-$$
Since $\alpha_4\le 1$ it is enough to prove that 
\begin{align}\label{finalinequalitycgr}
\alpha_2^2+s\alpha_2+(n-s-1)\alpha_2^{\frac{2}{n}}\le \left(\frac{\sqrt{n}+1}{2}\right)\left(\alpha_2^{2}+s+(n-s-1)\alpha_4\right).
\end{align}
To this end is sufficient to prove $$\alpha_2^{2}+s\alpha_2+(n-s-1)\alpha_2^{\frac{2}{n}}\le \left(\frac{\sqrt{n}+1}{2}\right)\left(\alpha_2^{2}+s\right).$$
We observe now that since $m_{p_k}\ge \frac{y_{p_k}+x_{p_k}}{2}$ we have $2(y_{p_k}+2s+2(n-s-1)x_{p_k})\ge ny_{p_k}+nx_{p_k}$ and then $2s+(n-2(s+1))x_{p_k}\ge (n-2)y_{p_k}$. Since $1,x_{p_k}<y_{p_k}$, if we assume $n-2(s+1)\ge 0$ we would get  $$(n-2)y_{p_k}\ge 2sy_{p_k}+(n-2(s+1))y_{p_k}>2s+(n-2(s+1))x_{p_k}\ge (n-2)y_{p_k},$$ a contradiction. Therefore we have 
$n-2(s+1)\le -1$ and then \begin{align}\label{condicionsobres}
n\le 2s+1.    
\end{align}
Now we assume that $n\ge 25$. We distinguish among two cases, first when $\alpha_2\ge \left(\frac{6}{5}\right)^{n}$. Here $$\alpha_2^2+s\alpha_2+(n-s-1)\alpha_2^{\frac{2}{n}}\le \alpha_2^{2}+(n-1)\alpha_2 \le 2\alpha_2^{2},$$ since $(n-1)\le (\frac{6}{5})^{n}\le \alpha_2,$ where we use that $n\ge 25$. Then since $2\alpha_2^2\le \left(\frac{\sqrt{25}+1}{2}\right)\alpha_2^2$ we conclude this case. Now we consider the case where $\alpha_2 \le \left(\frac{6}{5}\right)^{n}$. Here we have $\alpha_2^{\frac{2}{n}}\le \left(\frac{6}{5}\right)^2$. Therefore we just need to prove that 
\begin{align*}
\alpha_2^2+s\alpha_2+(n-s-1)\left(\frac{6}{5}\right)^2\le \left(\frac{\sqrt{n}+1}{2}\right)\left(\alpha_2^{2}+s\right),   
\end{align*}
or equivalently $$0\le \left(\frac{\sqrt{n}-1}{2}\right)\alpha_2^2-s\alpha_2+s\left(\frac{\sqrt{n}+1}{2}\right)-(n-s-1)\left(\frac{6}{5}\right)^2.$$ We just need to verify then that the discriminant of that equation is lesser than $0$. That is $$s^2<4\left(\frac{\sqrt{n}-1}{2}\right)\left[s\left(\frac{\sqrt{n}+1}{2}\right)-(n-s-1)\left(\frac{6}{5}\right)^2\right]=(n-1)s-2(\sqrt{n}-1)(n-s-1)\left(\frac{6}{5}\right)^2.$$
Since $(n-1)s=s^2+s(n-s-1)$ we just need $2\left(\frac{6}{5}\right)^2(\sqrt{n}-1)< \frac{n-1}{2}$ (given that $s\ge \frac{n-1}{2}$ by \eqref{condicionsobres}) or equivalently $4\left(\frac{6}{5}\right)^2-1<\sqrt{n}$. Since the left hand side is lesser than $5$ we conclude this and therefore we conclude \eqref{finalinequalitycgr}, from where the theorem follows.  
\end{proof}
\begin{remark}
From the previous proof it can be deduced that, if we define $$\mathcal{A}=\left\{(s,\alpha_{2},\alpha_4)\in \{1,\dots, n-2\}\times[1,\infty)\times[0,1]; \alpha_{2}^\frac{2}{n}\alpha_{4}^{\frac{n-s-1}{n}}>\alpha_2\sqrt{\alpha_4} \right\},$$ for $n\in [3,24]$ we have
$$\underset{p\to \infty}{\lim} \|M_{S_n}\|_{p}^{p}=\max\left\{\frac{1+\sqrt{n}}{2},\underset{(s,\alpha_2,\alpha_4)\in \mathcal{A}}{\sup}\frac{\alpha_{2}^2+s\alpha_2+(n-s-1)\alpha_{1}^{\frac{1}{n}}\alpha_{4}^{\frac{n-s-1}{n}}}{\alpha_{2}^{2}+s+(n-s-1)\alpha_{4}}\right\}.$$
However, for $n<25$, to compare the inner terms in the right hand side is more difficult.
\end{remark}

\section{The $p$-variation of maximal operators on graphs}
In order to compute ${\bf C}_{G,p}$ it is useful to study the functions that attain this supremum. Now we prove that actually these extremizers exist. 
\begin{proposition}
Given any connected simple finite graph $G=(V,E)$ and $p\in (0,\infty)$ there exists $f:V\to \mathbb{R}_{\ge 0}$ such that 
$$\frac{\var_{p}M_{G}f}{\var_{p}f}={\bf C}_{G,p}$$
\end{proposition}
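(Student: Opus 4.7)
The plan is to adapt the compactness argument of Proposition \ref{prop: existen extremizers}, with an additional reduction that accounts for two invariances of the ratio $\var_p M_G f/\var_p f$ which were not present for $\|M_G f\|_p/\|f\|_p$. First, since $M_G f = M_G|f|$ by definition and the edgewise triangle inequality $\bigl||f(v_1)|-|f(v_2)|\bigr| \leq |f(v_1)-f(v_2)|$ gives $\var_p |f| \leq \var_p f$, replacing $f$ by $|f|$ only increases the ratio. It therefore suffices to realize ${\bf C}_{G,p}$ among non-negative functions.

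Next, for $f \geq 0$ and $c \geq 0$, the identity $|f+c|=f+c$ yields
$$M_G(f+c)(v) = \sup_{r \geq 0} \intav{B(v,r)}(f+c) = M_G f(v) + c,$$
and $M_G(\lambda f) = \lambda M_G f$ for $\lambda > 0$. Since $\var_p$ is translation invariant and positively homogeneous of degree one, the ratio is unchanged under $f \mapsto \lambda f + c$ with $\lambda > 0$, $c \geq 0$. Hence every non-constant $f \geq 0$ can be normalized so that $\min_V f = 0$ and $\max_V f = 1$. Writing $V = \{a_1,\dots,a_n\}$ and $f_y(a_i) = y_i$, the set
$$A := \{y \in [0,1]^n : \min_i y_i = 0,\ \max_i y_i = 1\}$$
is compact. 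On $A$, each $M_G f_y(a_i)$ is a maximum of finitely many continuous functions of $y$, so both $\var_p M_G f_y$ and $\var_p f_y$ are continuous in $y$. Moreover $\var_p f_y > 0$ on $A$, since $G$ is connected and any path from a vertex where $f_y=0$ to one where $f_y=1$ must contain an edge across which $f_y$ varies. The ratio is then a continuous function on the compact set $A$, attaining its maximum at some $y_0 \in A$; combined with the reductions above, $f_{y_0}$ realizes ${\bf C}_{G,p}$.

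The only delicate point beyond the template of Proposition \ref{prop: existen extremizers} is the use of translation invariance: normalizing only by $\max f = 1$ as done there would not yield a compact domain here, since adding any constant leaves both $\var_p M_G f$ and $\var_p f$ unchanged, so the supremum could fail to be attained on the single-sided normalized slice. Pinning both $\min f = 0$ and $\max f = 1$ is what collapses the problem to a compact set, after which continuity and the extreme value theorem conclude the argument.
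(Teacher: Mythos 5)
Your proof is correct and follows essentially the same route as the paper: normalize to the compact slice $\{\min f=0,\ \max f=1\}$ using the translation and scaling invariance of the ratio, then apply continuity and compactness. The only addition is your explicit justification of the reduction to non-negative $f$ via $M_G f=M_G|f|$ and $\var_p|f|\le\var_p f$, a step the paper leaves implicit.
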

\begin{proof}
We write $|V|=n$ and $V=:\{a_1,a_2,\dots,a_n\}.$ Given $y=:(y_1,\dots, y_n)\in [0,1]^{n}\cap \left\{\underset{i=1,\dots,n}{\max}y_i=1\right\}\cap\left\{\underset{i=1,\dots,n}{\min}y_i=0\right\}=:A,$ we define $f_y:V\to \mathbb{R}_{\ge 0}$ by $f_{y}(a_i)=y_i$. We observe that $M_{G}f_y(a_i)$ is continuous in such set for any $i=1,\dots, n$. Therefore $\frac{\var_{p}M_{G}f_y}{\var_{p}f_y}$ is continuous with respect to $y$ in $A$ since the denominator is never $0$. Thus it attains its maximum at a point $y_0\in A$. We claim that 
$$\frac{\var_{p}M_{G}f_{y_0}}{\var_{p}f_{y_0}}={\bf C}_{G,p}.$$
In fact, for every $g:V\to \mathbb{R}_{\ge 0}$ we have that
the value $\frac{\var_{p}M_{G}g}{\var_{p}g}$ remains unchanged by doing the transformation $$g\mapsto \frac{g-\min_{i=1,\dots,n}g(a_i)}{\max_{i=1,\dots,n} g(a_i)}.$$
This last function is equal to $f_{y}$ for some $y,$ from where we conclude. 
\end{proof}

\subsection{The $2$-variation of $M_{S_n}$}
For all $p\geq 1$, it was proved by the authors in \cite{GRM} that $\var_p M_{K_n}f\leq \left(1-\frac{1}{n}\right)\var_pf$, for any real valued function $f$ defined on the vertices of $K_n$. The equality occurs when $f$ is a  delta function. The analogous problem for the star graph $S_n$ is more challenging, it was observed by the authors in \cite{GRM} that in this case, delta functions are not extremizers. Our next result solves this problem for $p=2$.   

\begin{theorem}
Let $n\geq3$ and let $S_n=(V,E)$ be the star graph with $n$ vertices $V=\{a_1,a_2,\dots,a_n\}$ with center at $a_1$. The following inequality holds
$$
\var_2 M_{S_n}f\leq\left(\frac{[(n-1)^2+n-2]^{1/2}}{n}\right)\var_2f
$$
for all $f:V\to\R$. %such that $f(a_1)\leq m_f$.
Moreover, this result is optimal.
\end{theorem}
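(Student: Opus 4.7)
My plan is to run the kind of structural reductions from Proposition \ref{threevaluesstar}: use merging and convexity arguments to reduce to a two-parameter family of test functions, then finish with a clean algebraic inequality. After replacing $f$ with $|f|$ (permissible since $M_{S_n}f=M_{S_n}|f|$ and $\var_2 f\ge \var_2|f|$), I set $c=f(a_1)$, $v_i=f(a_i)$ for $i\ge 2$, and $m=(c+\sum_{i\ge 2}v_i)/n$, and split into the regimes $c<m$ and $c\ge m$. The extremizer I will verify at the end, witnessing sharpness, is $f(a_1)=\tfrac1n$, $f(a_2)=1$, $f(a_i)=0$ for $i\ge 3$.

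When $c<m$, any leaf with $v_i\le m$ satisfies $M_{S_n}f(a_i)=m=M_{S_n}f(a_1)$ and contributes $0$ to $\var_2^2 M_{S_n}f$; thus $\var_2^2 M_{S_n}f=\sum_{v_i>m}(v_i-m)^2$. The decisive step is a merging reduction: if two leaf values $v_i,v_j$ both exceed $m$, replacing them by $(v_i+v_j-m,m)$ preserves $m$ and increases both $\var_2^2 M_{S_n}f$ and $\var_2^2 f$ by exactly $2(v_i-m)(v_j-m)$. Since the ratio is $<1$ here, the ratio strictly increases, so I can assume only $v_2$ exceeds $m$. Jensen's inequality then lets me replace $v_3,\dots,v_n$ by their common mean $w$, which preserves $m$ and $\var_2^2 M_{S_n}f$ while weakly lowering $\var_2^2 f$. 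Rescaling $v_2=1$ and substituting $x=1-c$, $y=1-w$, the target inequality becomes
\[(x+(n-2)y)^2\le (n^2-n-1)\bigl[x^2+(n-2)(y-x)^2\bigr],\]
which on expansion collapses to $(n-2)\bigl(nx-(n-1)y\bigr)^2\ge 0$. Equality along $nx=(n-1)y$ hits the claimed extremizer.

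The regime $c\ge m$ is the main obstacle because $M_{S_n}f(a_i)$ can equal $v_i$, $(c+v_i)/2$, or $m$ depending on where $v_i$ lies relative to $c$ and $2m-c$. An analogous merging argument (now swapping a pair $v_i,v_j>c$ for $(v_i+v_j-c,c)$) lets me assume $T':=\{i:v_i>c\}$ has at most one element. If $T'=\emptyset$, the pointwise bound $|c-M_{S_n}f(a_i)|\le (c-v_i)/2$ gives ratio $\le 1/2$, which is below the target. Otherwise, with $v_2=V$ the peak, I write $a=V-c>0$ and $X=\sum_{i\ge 3}(c-v_i)$; the hypothesis $c\ge m$ rewrites as $X\ge a$ and $c-m=(X-a)/n$. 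Splitting the non-peak indices into $A=\{v_i\ge 2m-c\}$ and $B=\{v_i<2m-c\}$ and using $(c-v_i)\le 2(c-m)$ on $A$, I bound
\[\var_2^2 M_{S_n}f\;\le\; a^2+(n-2)(c-m)^2=a^2+(n-2)\bigl(\tfrac{X-a}{n}\bigr)^2,\]
while Cauchy--Schwarz gives $\var_2^2 f\ge a^2+X^2/(n-2)$. Writing $\tau=X/a\ge 1$, the ratio is thus controlled by
\[\phi(\tau)=\frac{1+(n-2)(\tau-1)^2/n^2}{1+\tau^2/(n-2)},\]
and a one-line algebraic manipulation shows $\phi(\tau)\le \phi(1)=(n-2)/(n-1)$ on $[1,\infty)$; the comparison $(n-2)/(n-1)\le (n^2-n-1)/n^2$ (equivalent to $0\le 1$) then closes the case.

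The technical core is orchestrating the $c\ge m$ estimate so that the three possible behaviours of $M_{S_n}f(a_i)$ coalesce into a clean one-variable optimization in $\tau=X/a$; once the upper bound $(n-2)(X-a)^2/n^2$ is identified as the right substitute for the non-peak contribution, the remaining inequality $\phi(\tau)\le (n-2)/(n-1)$ is elementary.
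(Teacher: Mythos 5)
Your proof is correct, but it takes a genuinely different route from the paper's in both halves of the case analysis. For $f(a_1)<m_f$ the paper argues directly: it expands each $f(a_i)-m_f$ in terms of edge differences, splits the relevant indices into sets $S^{+},S^{-}$ according to the sign of a correction term, and controls the cross terms with AM--GM; your merging move (which adds exactly $2(v_i-m)(v_j-m)$ to both $\var_2^2 M_{S_n}f$ and $\var_2^2 f$, hence raises a ratio that is $<1$ there) followed by a Jensen averaging collapses everything to the two-parameter inequality $(x+(n-2)y)^2\le(n^2-n-1)\bigl[x^2+(n-2)(y-x)^2\bigr]$, whose slack is the perfect square $(n-2)(nx-(n-1)y)^2$ --- this is cleaner and makes the equality case $nx=(n-1)y$ (the same extremizer the paper exhibits) transparent. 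For $f(a_1)\ge m_f$ the paper invokes the existence of extremizers and runs a variational argument (Steps 1--4) to reduce to functions taking at most three values before checking two subcases; you instead bound each non-peak contribution by $(f(a_1)-m_f)^2$, apply Cauchy--Schwarz from below, and optimize the one-variable ratio $\phi(\tau)$ on $[1,\infty)$, obtaining the strictly better constant $(n-2)/(n-1)$ in this regime with no appeal to compactness, so your argument is more self-contained. The one point you should make explicit is the justification of the merging step in the second case: either check directly that $\var_2^2 M_{S_n}f<\var_2^2 f$ there (true, since $|M_{S_n}f(a_i)-f(a_1)|\le |f(a_i)-f(a_1)|$ for every leaf), or observe that $\var_2^2 M_{S_n}f-\tfrac{n^2-n-1}{n^2}\var_2^2 f$ can only increase under the move because the target constant squared is less than $1$; either patch is one line.
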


\begin{proof} The proof of this result is divided in two cases, the case 2 is divided in many steps.

{\it{Case 1: $f(a_1)\leq m_f$.}}
In this case $M_{S_n}f(a_1)=m_f$. If $M_{S_n}f(a_1)\geq M_{S_n}f(a_i)\geq m_f$ for all $2\leq i\leq n$ then the result is trivial.

Then, we assume without loss of generality that
$M_{S_n}f(a_i)>M_{S_n}f(a_1)$ for all $i\in\{2,3,\dots,k\}$ for some $2\leq k\leq n$ and $M_{S_n}f(a_i)=M_{S_n}f(a_1)=m_f$ for all $i\in\{k+1,k+2,\dots,n\}$.

We have that
\begin{equation}\label{LHS}
    (\var_2(M_{S_n}f))^2=\sum_{i=2}^{k}(f(a_i)-m_f)^2.
\end{equation}
Moreover, for all $i\in\{2,3,\dots,k\}$ we have that
\begin{align}\label{cada termino}
    0<(f(a_i)-m_f)^{}&=\frac{1}{n^{}}\left((n-1)f(a_i)-\sum_{j=1,j\neq i}^{k}f(a_j)-\sum_{j=k+1}^{n}f(a_j)\right)^{}\nonumber\\
    &=\frac{1}{n^{}}\left((n-1)(f(a_i)-f(a_1))-\sum_{j=2,j\neq i}^{k}(f(a_j)-f(a_1))+\sum_{j=k+1}^{n}(f(a_1)-f(a_j))\right)^{}
\end{align}
Let 
$$
S^{+}:=\left\{i\in\{2,3,\dots,k\}; -\sum_{j=2,j\neq i}^{k}(f(a_j)-f(a_1))+\sum_{j=k+1}^{n}(f(a_1)-f(a_j))>0\right\},
$$
and $S^{-}:=\{2,3,\dots,k\}\setminus{S^{+}}$.
Then by \eqref{LHS} and \eqref{cada termino} we have that
\begin{align}\label{I+II}
    (\var_2M_{S_n}f)^2\leq \frac{(n-1)^2}{n^2}\sum_{i\in S^{-}}(f(a_i)-f(a_1))^2+\sum_{i\in S^+}(f(a_i)-m_f)^2,
\end{align}
%Moreover, by \eqref{cada termino} we have that
and
\begin{align}\label{s+}
    \sum_{i\in S^+}(f(a_i)-m_f)^2=&\frac{(n-1)^2}{n^2}\sum_{i\in S^+}(f(a_i)-f(a_1))^2\nonumber\\
    &+\frac{2(n-1)}{n^2}\sum_{i\in S^+} (f(a_i)-f(a_1))\left(-\sum_{j=2,j\neq i}^{k}(f(a_j)-f(a_1))+\sum_{j=k+1}^{n}(f(a_1)-f(a_j))\right)\nonumber\\
    &+\frac{1}{n^2}\sum_{i\in S^+}\left(-\sum_{j=2,j\neq i}^{k}(f(a_j)-f(a_1))+\sum_{j=k+1}^{n}(f(a_1)-f(a_j))\right)^2.
\end{align}
Also, we observe that, since $f(a_1)\leq m_f$, then $$\sum_{i=2}^{k}(f(a_i)-f(a_1))\geq \sum_{i=k+1}^{n}f(a_1)-f(a_i),$$ therefore 
\begin{align}\label{ultimo termino}
    &\sum_{i\in S^+}\left(-\sum_{j=2,j\neq i}^{k}(f(a_j)-f(a_1))+\sum_{j=k+1}^{n}(f(a_1)-f(a_j))\right)^2\nonumber\\
    &\leq \left[\sum_{i\in S^+}\left(-\sum_{j=2,j\neq i}^{k}(f(a_j)-f(a_1))+\sum_{j=k+1}^{n}(f(a_1)-f(a_j))\right)\right]^2\nonumber\\
    &\leq \left(|S^+| \sum_{j=k+1}^{n}(f(a_1)-f(a_j))- (|S^+|-1) \sum_{j=2}^{k}(f(a_j)-f(a_1)) \right)^2\nonumber\\
    &\leq \left(\sum_{j=k+1}^{n}(f(a_1)-f(a_j)) \right)^2\nonumber\\
    &\leq (n-k)\sum_{j=k+1}^{n}(f(a_1)-f(a_j))^2.
\end{align}

Moreover, by the AM-GM inequality we have that
\begin{align}\label{medio termino}
 &2(n-1)\sum_{i\in S^+} (f(a_i)-f(a_1))\left(-\sum_{j=2,j\neq i}^{k}(f(a_j)-f(a_1))+\sum_{j=k+1}^{n}(f(a_1)-f(a_j))\right)\nonumber\\
 &\leq \sum_{i\in S^+}\left[(n-2)(f(a_i)-f(a_1))^2+\frac{(n-1)^2}{n-2}\left(-\sum_{j=2,j\neq i}^{k}(f(a_j)-f(a_1))+\sum_{j=k+1}^{n}(f(a_1)-f(a_j))\right)^2\right].
\end{align}

Combining \eqref{s+},\eqref{ultimo termino}, \eqref{medio termino} and using that $k\geq2$ we obtain that
\begin{align}\label{s+ conclusion}
    \sum_{i\in S^+}(f(a_i)-m_f)^2\leq&\frac{(n-1)^2+(n-2)}{n^2}\sum_{i\in S^+}(f(a_i)-f(a_1))^2\nonumber\\
    &+\frac{(n-k)}{n^2}(1+\frac{(n-1)^2}{n-2})\sum_{j=k+1}^{n}(f(a_1)-f(a_j))^2\nonumber\\
    &\leq \frac{(n-1)^2+(n-2)}{n^2}\left[\sum_{i\in S^+}(f(a_i)-f(a_1))^2+\sum_{j=k+1}^{n}(f(a_1)-f(a_j))^2\right]. 
\end{align}

Finally, combining \eqref{I+II} and \eqref{s+ conclusion} we conclude that
\begin{align*}
    (\var_2{M_{S_n}f})^2\leq \frac{(n-1)^2+(n-2)}{n^2}\sum_{i=2}^{n}(f(a_i)-f(a_1))^2.
\end{align*}

Moreover, we observe that in order to have an equality in \eqref{s+ conclusion} we need to have $k=2$ (this means that there is only one term larger than $f(a_1)$), in order to have an equality in \eqref{ultimo termino} we need to have $f(a_j)=f(a_{k+1})=f(a_{3})$ for all $j\geq k+1=3$, and in order to have an equality in \eqref{medio termino} we need to have   $(f(a_2)-f(a_1))=(n-1)(f(a_1)-f(a_{3}))$. We verify that if $f(a_1)=x>0$, $f(a_{j})=x-c$ for all $j\geq 3$ and some $c\in(0,x)$, and $f(a_2)=x+c(n-1)$ then we have an extremizer. In fact, in this case we have that $M_{S_n}f(a_2)=x+c(n-1)$ and $M_{S_n}f(a_j)=x+c/n$ for all $j\neq2$. Therefore
$$
\frac{\var_2(M_{S_n}f)}{\var_2f}=\frac{c(n-1-1/n)}{[c^2(n-1)^2+c^2(n-2)]^{1/2}}=\frac{[(n-1)^2+(n-2)]^{1/2}}{n}.
$$

{\it{Case 2: $f(a_1)>m_f$.}} For this case we assume without loss of generality that $f(a_2)\ge f(a_3)\ge \dots f(a_s)> f(a_1)\ge f(a_{s+1})\ge \dots f(a_k)> 2m_f-f(a_1)\ge f(a_{k+1})\ge \dots f(a_n).$ We observe that $M_{S_n}f(a_i)=f(a_i),$ for $i\le s;$ $M_{S_n}f(a_i)=\frac{f(a_1)+f(a_i)}{2}$ for $s<i\le k$ and $M_{S_n}f(a_i)=m_f$ for $i>k.$ We write $f(a_i)-f(a_1)=x_i$ for $i\le s$, $f(a_1)-f(a_i)=y_i$ for $i>s$ and $f(a_1)-m_f=u.$
Then, our goal is to prove 
\begin{align}\label{goal}
\sum_{i=2}^{s}x_i^2+\sum_{i=s+1}^k\left(\frac{y_i}{2}\right)^2+(n-k)u^2\le \left(1-\frac{n+1}{n^2}\right)\left(\sum_{i=2}^{s}x_i^2+\sum_{i=s+1}^{n}y_i^2\right),
\end{align}
since $1-\frac{n+1}{n^2}=\frac{(n-1)^2+(n-2)}{n^2}$.
Assume that $f:V\to\mathbb{R}_{\ge 0}$ is such that $$\frac{Var_{2}(M_{S_n}f)}{Var_{2}(f)}={\bf C}_{S_n,p}.$$ We prove some properties about $f$ following the ideas of Propositions \ref{twovaluescomplete} and \ref{threevaluesstar}.
First, we observe that $s\ge 2.$ Otherwise we would have that LHS in \eqref{goal} is less or equal than
$$\sum_{i=s+1}^{k}\left(\frac{y_i}{2}\right)^2\le \frac{1}{4}Var_2(f)^2<\left(1-\frac{n+1}{n^2}\right)Var_{2}(f)^2.$$ So $f$ could not be an extremizer in that case. \\

{\it{Step 1: $s=2$.}}
We consider $\widetilde{f}:V\to \mathbb{R}$ defined by $\widetilde{f}(a_2)=\sum_{i=2}^{s}f(a_i)-(s-2)f(a_1),$ $\widetilde{f}(a_i)=\widetilde{f}(a_1)$ for $i=3,\dots,s$ and $\widetilde{f}=f$ elsewhere. Clearly $m_{\widetilde{f}}=m_{f}$ then, defining $\widetilde{x}_i$ and $\widetilde{y}_i$ and $\widetilde{u}$ analogously to $x_i$, $y_i$ and $u$, since ${\bf C}_{S_n,2}^2<1$ we observe that
\begin{align}\label{computation}
\begin{split}%\label{case 2 key reduction}
0&={\bf C}_{S_n,2}^2\left(\sum_{i=2}^{s}x_i^2+\sum_{i=s+1}^{n}y_i^2\right)-\sum_{i=2}^{s}x_i^2-\sum_{i=s+1}^k\left(\frac{y_i}{2}\right)^2-(n-k)u^2\\
&=({\bf C}_{S_n,2}^2-1)\sum_{i=2}^{s}x_i^2+({\bf C}_{S_n,2}^2-\frac{1}{4})\left(\sum_{i=s+1}^k y_i^2\right)+({\bf C}_{S_n,2}^2)\left(\sum_{i=k+1}^n y_i^2\right)-(n-k)u^2\\
&\ge \left({\bf C}_{S_n,2}^2-1\right)\left(\sum_{i=2}^{s}x_i\right)^2+\left({\bf C}_{S_n,2}^2-\frac{1}{4}\right)\left(\sum_{i=s+1}^k y_i^2\right)+({\bf C}_{S_n,2}^2)\left(\sum_{i=k+1}^n y_i^2\right)-(n-k)u^2\\
&=({\bf C}_{S_n,2}^2-1)\sum_{i=2}^s\widetilde{x_i}^2+\left({\bf C}_{S_n,2}^2-\frac{1}{4}\right)\left(\sum_{i=s+1}^k \widetilde{y}_i^2\right)+({\bf C}_{S_n,2}^2)\left(\sum_{i=k+1}^n \widetilde{y}_i^2\right)-(n-k)\widetilde{u}^2.
\end{split}
\end{align}
Therefore $$\sum_{i=2}^{s}\widetilde{x}_i^2+\sum_{i=s+1}^k \left(\frac{\widetilde{y}_i}{2}\right)^2+(n-k)\widetilde{u}^2\ge {\bf C}_{S_{n},2}^2\left(\sum_{i=2}^s \widetilde{x}_i^2+\sum_{i=s+1}^{n}\widetilde{y}_i^2\right).$$ This implies that $$\frac{Var_2(M_{S_n}\widetilde f)}{Var_2(\widetilde f)}\geq {\bf C}_{S_n,2},$$ thus \eqref{computation} has to be an equality. Then $\sum_{i=2}^sx^2_i=(\sum_{i=2}^{s}x_i)^2$, therefore, there exists at most one $j\in\{2,\dots,s\}$ such that $x_j\neq0$. Since we have that $x_j>0$ for all $j\in\{2,\dots,s\}$ we conclude that $s=2$.\\

%with equality if and only if $\widetilde{x}_i=x_i$ for all $i=1,\dots,s$ and $\widetilde{y}_i=y_i.$ Therefore $\frac{Var_{2}M_{S_n}\widetilde{f}}{Var_{2}(\widetilde{f})}\ge L_{2,S_n}.$ By the maximality of $f$ we have $\widetilde{f}=f.$ 

%Let us define $s_0$ as the minimum integer bigger than $1$ such that $Mf(a_{s_0})\le Mf(a_1).$ 
{\it{Step 2: $f(a_j)=f(a_3)$ for all $j\in\{3,4,\dots,k\}$}}.
We define the function $\widetilde{f}:V\to \mathbb{R}_{\ge 0}$ as follows: $\widetilde{f}(a_i)=\frac{\sum_{j=3}^{k}f(a_j)}{k-2}$ for every $i\in \{3,\dots k\}$ and $\widetilde{f}=f$ elsewhere. We define $\widetilde{x}_i,$ $\widetilde{y}_i$ and $\widetilde{u}$ analogously to $x_i,$ $y_i$ and $u,$ respectively.
We observe that $\sum_{i=3}^k \widetilde{y}_i=\sum_{i=3}^k y_i,$ and by H\"older's inequality we have $\sum_{i=3}^k \widetilde{y}_i^2\le \sum_{i=3}^k y_i^2$. So, similarly as in \eqref{computation}, since ${\bf C}_{S_n,2}^2>\frac{1}{4},$ we conclude that $\frac{Var_{2}M_{S_n}\widetilde{f}}{Var_{2}(\widetilde{f})}\ge {\bf C}_{S_n,2}.$ Thus $\widetilde f$ is also an extremizer, and 
$\sum_{i=3}^k \widetilde{y}_i^2= \sum_{i=3}^k y_i^2$. This implies that $\widetilde{f}=f.$\\ 

{\it{Step 3: $f(a_j)=f(a_{k+1})$ for all $j\in\{k+1,k+2,\dots,n\}$.}}
Now we define $\widetilde{f}:V\to \mathbb{R}$ as follows: $\widetilde{f}(a_i)=\frac{\sum_{j=k+1}^n f(a_j)}{n-k}$ for every $i\ge k+1,$ and $f=\widetilde{f}$ elsewhere. Then, we have that $$\sum_{i=k+1}^n \widetilde{y}_i=\sum_{i=k+1}^n y_i$$ and $$\sum_{i=k+1}^n\widetilde{y}_i^2\le  \sum_{i=k+1}^n y_i^2.$$ So, by a computation similar than \eqref{computation} we have that $$\frac{Var_{2}M_{S_n}\widetilde{f}}{Var_{2}(\widetilde{f})}\ge {\bf C}_{S_n,2}.$$ Then $\widetilde{f}=f.$ 

{\it{Step 4: Conclusion.}}
So, by now we conclude that $f$ takes at most $4$ values. In fact, we know that $y_i=y_{3}$ for $i\le k$ and $y_i=y_{k+1}$ for $i\ge k+1.$ In the following we conclude that $y_3=y_{k+1}.$ We start observing that if $2m_f-f(a_1)=f(a_j)$ for all $j\in\{k+1,\dots,n\}$ then we can conclude as in the Step 2. Moreover, since $f(a_3)\geq f(a_{k+1})$ we have that $y_3\leq y_{k+1}$.

Let us assume that $y_3<y_{k+1}.$ and there exists $i\in\{k+1,\dots,n\}$ such that $2m_f-f(a_1)>f(a_i)$, We consider now $\widetilde{f}$ defined as follows, $\widetilde{f}(a_k)=f(a_k)-\varepsilon,$ $\widetilde{f}(a_{i})=f(a_{i})+\varepsilon,$ and $\widetilde{f}=f$ elsewhere, where $\varepsilon$ is small enough such that $f(a_k)-\varepsilon>2m_f-f(a_1)>f(a_{i})+\varepsilon$. We observe that $$\frac{Var_{2}M_{S_n}f}{Var_{2}f}<\frac{Var_{2}M_{S_n}\widetilde{f}}{Var_{2}\widetilde{f}}.$$ In fact,
$$Var_{2}M_{S_n}{f}=\sum_{j=2}^sx_{j}^2+\sum_{j=s+1}^{k}\frac{y_j^2}{4}+(n-k)u^2< \sum_{j=2}^sx_{j}^2+\sum_{j=s+1}^{k-1}\frac{y_j^2}{4}+\frac{(y_k+\varepsilon)^2}{4}+(n-k)u^2=Var_{2}M_{S_n}\widetilde{f}$$
and 
$$Var_{2}\widetilde{f}=\sum_{j=2}^{s}x_j^2+\sum_{j=s+1}^{k-1}y_j^2+(y_k+\varepsilon)^2+(y_{i}-\varepsilon)^2+\sum_{j=k+1,j\neq i}^ny_j^2<\sum_{j=2}^{s}x_j^2+\sum_{j=s+1}^ny_j^2=Var_{2}f$$ 
for $\varepsilon$ small enough, since $(y_k+\varepsilon)^2+(y_{k+1}-\varepsilon)^2=y_{k}^2+y_{k+1}^2+2\varepsilon(y_{k}-y_{k+1})+2\varepsilon^2<y_{k}^2+y_{k+1}^2$ given that $y_{k}-y_{k+1}+\varepsilon=y_3-y_{k+1}+\varepsilon<0$ for $\varepsilon$ small enough.  Therefore $\frac{Var_{2}M_{S_n}f}{Var_{2}f}<\frac{Var_{2}M_{S_n}\widetilde{f}}{Var_{2}\widetilde{f}},$ contradicting the fact that $f$ is an extremizer. Then, $y_3=y_{k+1}$ or equivalently $f(a_3)=f(a_{k+1})$, therefore $f$ only takes three values. Now we have only two subcases left to analyse: 
\begin{itemize}
    \item {\it{Subcase 1: $f(a_1)+f(a_n)\ge 2m_{f}.$}} In this case $\frac{f(a_1)+f(a_n)}{2}=M_{S_n}f(a_i)$ for $i=3,\dots,n$ and $y_3=y_i$ for $i=3,\dots,n.$ Also, we observe that $y_3(n-2)=x_2+nu$ and $u\ge \frac{y_3}{2}.$ Then we need to prove that $$x_2^2+(n-2)\frac{y_3^2}{4}\le \left(1-\frac{n+1}{n^2}\right)(x_2^2+(n-2)y_3^2),$$
    or, equivalently,\begin{align*}
    \frac{n+1}{n^2}x_2^2\le (n-2)\left(3/4-\frac{n+1}{n^2}\right)y_3^2.
    \end{align*} 
    Since $y_3(n-2)=x_2+nu\ge x_2+\frac{n}{2}y_3$ we have $y_3\left(\frac{n}{2}-2\right)\ge x_2,$ therefore is enough to prove  $$\left(\frac{n}{2}-2\right)^2\frac{n+1}{n^2}\le (n-2)\left(3/4-\frac{n+1}{n^2}\right),$$ and that can be established for $n\ge 3.$
    \item {\it{Subcase 2: $f(a_1)+f(a_2)\le 2m_{f}.$}} In this case $M_{S_n}f(a_i)=m_f$ for $i=3,\dots,n$. Also, we observe that $u\le \frac{y_3}{2}$. Thus we need to prove that 
    $$x_2^2+(n-2)u^2\le \left(1-\frac{n+1}{n^2}\right)(x_2^2+(n-2)y_3^2),$$
    or equivalently $$x_2^2\left(\frac{n+1}{n^2}\right)+(n-2)u^2\le (n-2)\left(1-\frac{n+1}{n^2}\right)y_3^2.$$ Indeed, since $y_3(n-2)=x_2+nu$ we have $$y_3^2\ge \frac{x_2^2}{(n-2)^2}+\frac{n^2}{(n-2)^2}u^2.$$ Then is enough to prove $\frac{n+1}{n^2}\le \frac{\left(1-\frac{n+1}{n^2}\right)}{n-2}$ and $(n-2)^2\le n^2\left(1-\frac{n+1}{n^2}\right).$ Since both hold for $n\ge 3,$ we conclude.
\end{itemize}
%By the means inequality we have $\sum y_i^2\ge \frac{(\sum y_i)^2}{n-s}\ge \frac{(\sum x_i)^2}{n}\ge \frac{\sum x_i^2}{n}.$ Therefore, is enough to prove $n(1-\frac{(n-1)^2+(n-2)}{n^2})\le (\frac{(n-1)^2+(n-2)}{n^2}-\frac{1}{4}) $
\end{proof}

\subsection{The $p$-variation of $M_{G}$.}  
For a finite connected graph $G=(V,E)$ with vertices $V=\{a_1,a_2,\dots,a_n\}$ we define $d(G)=:\max\{d_{G}(a_i,a_j);a_i,a_j\in V\}$ and
$\Omega_{G}:=\{a_i\in V; \exists a_j \in V\ \text{such that}\ \ d(G)=d(a_i,a_j)\}$. For all $H\subset G$ we choose a minimum degree element of $H$ and we denote this by $a_H$.

\begin{proposition}
Let $G$ be a finite connected graph with $n$ vertices, assume that $\deg(a_{\Omega_G})=k$ and there exists a vertex $x\in V$ such that $d(x,a_{\Omega_G})\geq d(x,y)$ for all $y\in V$ and there are $k$ disjoint paths from $a_{\Omega_G}$ to $x$. Then
$$
{\bf C}_{G,p}\geq 1-\frac{1}{n}%=\var_pM_{K_n}=\var_pM_{S_n}
$$
for all $p\in (0,1]$.
\end{proposition}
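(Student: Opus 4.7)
The plan is to exhibit an explicit test function that realizes the lower bound. Take $f=\delta_{a_{\Omega_G}}$, the indicator of the vertex $a_{\Omega_G}$. Since $a_{\Omega_G}$ has degree $k$, only the $k$ edges incident to this vertex contribute to the variation, giving $\var_{p}f = k^{1/p}$. I next evaluate $M_{G}f$ at the two endpoints of the given paths. At $a_{\Omega_G}$, the ball of radius $0$ gives $M_{G}f(a_{\Omega_G}) = 1$. At $x$, the hypothesis $d(x,a_{\Omega_G})\ge d(x,y)$ for all $y\in V$ forces $B(x,d(x,a_{\Omega_G}))=V$, while balls of smaller radius around $x$ either miss $a_{\Omega_G}$ (average zero) or have a strictly larger denominator; hence the supremum defining $M_{G}f(x)$ is attained at $r = d(x,a_{\Omega_G})$ and equals $1/n$.

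Next, I bound $\var_p M_{G}f$ from below using the $k$ disjoint paths $\pi_1,\dots,\pi_k$ from $a_{\Omega_G}$ to $x$. Along each path $\pi_j=(v_0,v_1,\dots,v_{\ell_j})$ with $v_0 = a_{\Omega_G}$ and $v_{\ell_j} = x$, the telescoping triangle inequality gives
$$\sum_{i=0}^{\ell_j-1}\bigl|M_{G}f(v_i)-M_{G}f(v_{i+1})\bigr| \ge \bigl|M_{G}f(a_{\Omega_G})-M_{G}f(x)\bigr| = 1 - \tfrac{1}{n}.$$
Since $p\in(0,1]$, the map $t\mapsto t^p$ is subadditive on $[0,\infty)$, so iteratively $\sum_i a_i^p \ge \left(\sum_i a_i\right)^p$ for nonnegative $a_i$; combined with the monotonicity of $t\mapsto t^p$ and the previous display this yields
$$\sum_{i=0}^{\ell_j-1}\bigl|M_{G}f(v_i)-M_{G}f(v_{i+1})\bigr|^p \ge \left(1-\tfrac{1}{n}\right)^p.$$
The disjointness of the paths $\pi_j$ makes their edge sets pairwise disjoint, so summing these $k$ per-path bounds with no overcounting gives $(\var_p M_{G}f)^p \ge k\left(1-\tfrac{1}{n}\right)^p$. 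Taking $p$-th roots and dividing by $\var_p f = k^{1/p}$ produces ${\bf C}_{G,p}\ge 1-1/n$.

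I expect the argument to be essentially routine; the only delicate point is the identification $M_{G}f(x)=1/n$, which is exactly what the antipodal-type condition on $x$ buys: the optimal averaging ball at $x$ must be all of $V$, collapsing the maximal function to its minimal possible value. The condition $a_{\Omega_G}\in\Omega_G$ is not used directly in the inequality itself; what drives the estimate is the degree $k$, so that the delta at $a_{\Omega_G}$ realizes a sharp $\var_p f = k^{1/p}$, together with the $k$ edge-disjoint paths to $x$, so that the full jump $1-1/n$ can be harvested $k$ times without collision. A minor sanity check to run at the end is that \emph{disjoint} in the hypothesis entails edge-disjointness, which holds under either the internally vertex-disjoint or the edge-disjoint convention.
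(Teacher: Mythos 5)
Your proof is correct and follows essentially the same route as the paper: both test the ratio on the delta function $\delta_{a_{\Omega_G}}$, use $\deg(a_{\Omega_G})=k$ to get $\var_p\delta_{a_{\Omega_G}}=k^{1/p}$, and harvest the jump $1-\frac{1}{n}$ along the $k$ disjoint paths to $x$. The paper states the key inequality $\var_pM_G\delta_{a_{\Omega_G}}\geq k^{1/p}\left(1-\frac{1}{n}\right)$ without justification; your telescoping-plus-subadditivity argument is exactly the missing detail.
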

\begin{proof}
This result follows observing that under these hypothesis we have that
$$
{\bf C_{G,p}}\geq \frac{\var_pM_{G}\delta_{a_{\Omega_G}}}{\var_p\delta_{a_{\Omega_G}}}=\frac{\var_pM_{G}\delta_{a_{\Omega_G}}}{k^{1/p}}\geq 1-\frac{1}{n}.
$$
\end{proof}

In particular this results hold for trees (in that case $k=1$), cycles (in that case $k=2$), hypercubes $Q_n$ (with $2^n$ vertices, in that case $k=n$), whenever $k=1$, etc. 

\begin{remark}
For all $p\in(0,1)$ we have that
${\bf C}_{L_n,p}>1-\frac{1}{n},$ here $L_n$ is the line graph.  This also happens in many other situations. Moreover, it was proved by the authors in a previous work \cite{GRM} that ${\bf C}_{S_n}=1-\frac{1}{n}$ for all $p\in[1/2,1]$ and similarly ${\bf C}_{K_n}=1-\frac{1}{n}$ for all $p\geq\frac{\ln 4}{\ln 6}$
\end{remark}

Let $\Gamma_n$ the family of all connected simple finite graphs with n vertices. Our previous proposition motivates the following question.\\

{\it{Question A:}} Let $p>0$. What are the values 
$$
c_{n,p}=\inf_{G\in \Gamma_n}{\bf C}_{G,p}\ \ \text{and}\ \  C_{n,p}=\sup_{G\in\Gamma_n}{\bf C}_{G,p}?
$$
Moreover, what are the extremizers? i.e what are the graphs $G\in \Gamma_n$ for which ${\bf C}_{G,p}=C_{n,p}$ or ${\bf C}_{G,p}=c_{n,p}$?

%\begin{conjecture}
%Let $G$ be a finite connected graph with $n$ vertices. Then
%$$
%\var_pM_{G}\geq 1-\frac{1}{n}=\var_pM_{K_n}=\var_pM_{S_n}
%$$
%for all $p\in (0,1]$.
%\end{conjecture}

%%%%%%%%%%%55%%%%%%%%%%%%%%%%%%%%%%%%%%
%%%%%%%%%%%%%%%%%%%%%%%%%%%%%%%%%%%%
\section{Discrete Hardy-Littlewood maximal operator}

In this section we write $M:=M_{\mathbb{Z}}$.
The following result was proved by the second author for $p=1$ in \cite{Madrid2016}. This proof follows a similar strategy, we include some details for completeness.

\begin{theorem}\label{lim d=1 C=2}
Let $p\in(0,1]$ and $f:\Z\to\R$ be a function in $\ell^{p}(\Z).$ Then
\begin{equation}\label{main theo cent d=1}
\var_p Mf\leq \left(2\sum_{k=0}^{\infty}\frac{2^{p}}{(2k+1)^p(2k+3)^p}\right)^{\frac{1}{p}}\|f\|_{\ell^{p}(\Z)}=:{\bf C}_p\|f\|_p,
\end{equation}
and the constant ${\bf C}_p$ is the best possible. Moreover, the equality for $p\in (\frac{1}{2},1]$ is attained if and only if $f$ is a delta function.
\end{theorem}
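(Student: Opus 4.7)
A direct calculation shows that the constant ${\bf C}_p$ is exactly the $p$-variation of $M\delta_0$. The supremum in $M\delta_0(x)$ is attained at radius $r=|x|$, giving $M\delta_0(x)=(2|x|+1)^{-1}$, so that
\begin{align*}
\var_p^p M\delta_0 = 2\sum_{k=0}^\infty\!\left(\tfrac{1}{2k+1}-\tfrac{1}{2k+3}\right)^p = 2\sum_{k=0}^\infty\frac{2^p}{(2k+1)^p(2k+3)^p} = {\bf C}_p^p.
\end{align*}
Since $\|\delta_0\|_p=1$, ${\bf C}_p$ cannot be improved, and $\delta_0$ realizes equality. The nontrivial content of \eqref{main theo cent d=1} is for $p\in(1/2,1]$; for $p\le 1/2$ one has ${\bf C}_p=+\infty$ and the inequality is vacuous.

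\textbf{Inequality.} Since $Mf = M|f|$ we may take $f\ge 0$ and, by a standard approximation, assume $f$ has finite support $S=\{n_1<\cdots<n_m\}$. Following Madrid's strategy from \cite{Madrid2016}, the proof proceeds by analyzing the pointwise structure of $Mf$ through the radius that achieves the supremum. For $x\gg n_m$, the optimal radius has to enclose the full support, and one checks explicitly that
\begin{align*}
Mf(x)=\frac{\|f\|_1}{2(x-n_1)+1},
\end{align*}
with a symmetric formula for $x\ll n_1$. The two tail regions therefore contribute to $\var_p^p Mf$ at most $\|f\|_1^p\cdot{\bf C}_p^p$, which is in turn bounded by ${\bf C}_p^p\,\|f\|_p^p$ using the elementary inequality $\|f\|_1^p\le\|f\|_p^p$ valid for $0<p\le 1$. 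The remaining ``inner'' part, over a bounded window containing $S$ and the transition region, is controlled by combining the piecewise-average structure of $Mf$ (there is only a bounded number of candidate radii per point) with the subadditivity $(a+b)^p\le a^p+b^p$ valid for $p\le 1$: each inner increment $|Mf(x+1)-Mf(x)|^p$ is charged to the atoms of $f$ active in the chosen interval, and the total charge is absorbed into the slack $\|f\|_p^p-\|f\|_1^p$ left over from the tail estimate.

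\textbf{Extremizers and main obstacle.} If $f$ is a scalar multiple of a single delta, equality holds by translation invariance. Conversely, assume $p\in(1/2,1]$ and $|S|\ge 2$. For $p\in(1/2,1)$ strict inequality is immediate, since $\|f\|_1^p<\|f\|_p^p$ whenever $f$ has more than one nonzero coordinate. The endpoint $p=1$ is more delicate: here $\|f\|_1=\|f\|_p$ and the subadditivity is an equality, so strictness must be extracted from the inner analysis, by showing that when $|S|\ge 2$ the transition of the optimal radius between the various atoms of $f$ produces strictly smaller inner $p$-variation than the value allotted by the tail bound. \emph{The main obstacle} is this inner bookkeeping: one must distribute the inner increments cleanly among the atoms of $f$ in a way that closes exactly with the tail estimate and still leaves strict room at $p=1$, accounting for the limited ``budget'' coming from $\|f\|_p^p-\|f\|_1^p$, which degenerates to zero precisely at the endpoint where rigidity is hardest to establish.
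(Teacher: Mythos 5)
There is a genuine gap: your argument establishes sharpness (the computation of $\var_p^p M\delta_0$ is correct and matches the paper), but it does not actually prove the inequality \eqref{main theo cent d=1}, and the route you sketch would not close. Two concrete problems. First, the tail formula $Mf(x)=\|f\|_1/(2(x-n_1)+1)$ for $x\gg n_m$ is false in general: the optimal ball centered far to the right need not enclose the whole support (take $f=\delta_0+\varepsilon\delta_{-N}$ with $\varepsilon$ small and $N$ large; for large $x>0$ the radius $x$ beats the radius $x+N$), so the tails do not reduce to a single-delta computation with mass $\|f\|_1$. Second, and more importantly, the entire ``inner'' estimate — which you correctly identify as the main obstacle — is left unproven, and at $p=1$ your own accounting shows the available slack $\|f\|_p^p-\|f\|_1^p$ is zero, so nothing is established there. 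This also undercuts the extremizer claim: strictness for $p\in(\frac12,1)$ is not ``immediate'' from $\|f\|_1^p<\|f\|_p^p$, because that inequality only controls your tail term, not the inner one.

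The paper's proof avoids any tail/inner decomposition and works increment by increment. For $n$ with $Mf(n)>Mf(n+1)$, choose $r_n$ with $Mf(n)=A_{r_n}f(n)$ and compare against the ball of radius $r_n+1$ centered at $n+1$, which contains the same points plus two more; since both averages have the same numerator up to nonnegative additions,
\begin{equation*}
Mf(n)-Mf(n+1)\le A_{r_n}f(n)-A_{r_n+1}f(n+1)\le \frac{2}{(2r_n+1)(2r_n+3)}\sum_{k=n-r_n}^{n+r_n}f(k),
\end{equation*}
and $p$-subadditivity turns the right side into a weighted sum of the $f(k)^p$. The key bookkeeping step — precisely the one you defer — is then a change in the order of summation: for each fixed atom $m$, the ball at $n$ can only see $m$ if $r_n\ge|n-m|$, so the weight attached to $f(m)^p$ by the increment at $n$ is at most $\frac{2^p}{(2|n-m|+1)^p(2|n-m|+3)^p}$ (with a shift by one on one side), and summing over $n$ gives at most ${\bf C}_p^p$ per atom. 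Equality forces, for every $s_1$ in the support, that every $n\ge s_1$ lies in $X^-$ with $r_n=n-s_1$; two distinct atoms would impose incompatible radii, which yields the delta characterization for all $p\in(\frac12,1]$, including the endpoint $p=1$ that your approach cannot reach. You would need to replace your tail/inner scheme with this per-atom charging argument (or something equivalent) for the proof to go through.
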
 

%\subsection{Proof of Theorem \ref{lim d=1 C=2}}
\begin{proof}[Proof of Theorem \ref{lim d=1 C=2}]
 We can assume without loss of generality that $f\geq0$. We observe that for all $n\in\Z$ there exists $r_{n}\in\Z$ such that $Mf(n)=A_{r_{n}}f(n)$ (this follows from the fact that $f\in l^p(\Z)$), then we consider the sets
$$
X^{-}=\{n\in\Z; Mf(n)>Mf(n+1)\}\ \  \text{and}\ \ X^{+}=\{n\in\Z; Mf(n+1)>Mf(n)\}.
$$
\noindent
Then
\begin{eqnarray}\label{suma}
(\var_p Mf)^p&=&\sum_{n\in\Z}|Mf(n)-Mf(n+1)|^p\nonumber\\
%&=&\sum_{n\in X^{-}}(Mf(n)-Mf(n+1))^p+\sum_{n\in X^{+}}(Mf(n+1)-Mf(n))^p\nonumber\\
&\leq&\sum_{n\in X^{-}}(A_{r_{n}}f(n)-A_{r_{n}+1}f(n+1))^P+\sum_{n\in X^{+}}(A_{r_{n+1}}f(n+1)-A_{r_{n+1}+1}f(n))^p. 
\end{eqnarray}

%Then, for any $m\in\Z$ fixed, we find
%the maximal contribution of $f(m)$ to the first term in the right hand side of \eqref{suma}. %The maximal contribution to the second term can be calculated similarly.\\ 

Observe that for all $n\in X^-$ we have that
\begin{equation}\label{term 1}
(A_{r_n}f(n)-A_{r_n+1}f(n+1))^p\leq \left|\frac{2}{(2r_n+1)(2r_n+3)}\sum_{k=n-r_n}^{n+r_n}f(k)\right|^p\leq \frac{2^p}{(2r_n+1)^p(2r_n+3)^p}\sum_{k=n-r_n}^{n+r_n}f(k)^p.
\end{equation}
Then, for any $m\in\Z$ fixed, we find
the maximal contribution of $f(m)^p$ to the right hand side of \eqref{term 1}.

\textbf{Case 1:} If $n\geq m$.\\
%\textbf{Subcase 1.1:} 
Since $n\in X^{-}$. In this case we have that the contribution of $f(m)$ to the right hand side of \eqref{term 1} is $0$ (if $m<n-r_{n}$) or $\frac{2^p}{(2r_{n}+1)^p(2r_{n}+3)^p}$ (if $n-r_{n}\leq m$). Thus, the contribution of $f(m)$ to $(A_{r_{n}}f(n)-A_{r_{n}+1}f(n+1))^p$ is at most
\begin{eqnarray*}
%\frac{1}{2r_{n}+1}-\frac{1}{2r_{n}+3}&=&
\frac{2^p}{(2r_{n}+1)^p(2r_{n}+3)^p}
\leq \frac{2^p}{(2(n-m)+1)^p(2(n-m)+3)^p}.%\\
%&=&\frac{1}{2(n-p)+1}-\frac{1}{2(n-p)+3}.
\end{eqnarray*}
Here the equality happen if and only if $r_{n}=n-m$.\\
%\noindent
%\textbf{Subcase 1.2:} $n\in X^{+}$. Now we have that the contribution of $f(p)$ to $A_{r_{n+1}}f(n+1)-A_{r_{n+1}+1}f(n)$ is non-positive (if $p<n+1-r_{n+1}$) or $\frac{1}{2r_{n+1}+1}-\frac{1}{2r_{n+1}+3}$ (if $n+1-r_{n+1}\leq p$), and in the second case we have that the contribution of $f(p)$ to $A_{r_{n+1}}f(n+1)-A_{r_{n+1}+1}f(n)$ satisfies
%\begin{eqnarray*}
%\frac{1}{2r_{n+1}+1}-\frac{1}{2r_{n+1}+3}&=&\frac{2}{(2r_{n+1}+1)(2r_{n+1}+3)}\\
%&\leq& \frac{2}{(2(n+1-p)+1)(2(n+1-p)+3)}\\
%&=&\frac{1}{2(n+1-p)+1}-\frac{1}{2(n+1-p)+3}\\
%&<&\frac{1}{2(n-p)+1}-\frac{1}{2(n-p)+3}.
%\end{eqnarray*}

%In the other case

\textbf{Case 2:} If $n<m$.\\
%\textbf{Subcase 2.1:} 
Since $n\in X^{-}$. In this case we have that the contribution of $f(m)^p$ to the right hand side of \eqref{term 1} is $0$ (if $m>n+r_{n}$) or $\frac{2^p}{(2r_{n}+1)^p(2r_{n}+3)^p}$ (if $n+r_{n}\geq m$). Thus, the contribution of $f(m)^p$ to the right hand side of \eqref{term 1} is at most
\begin{eqnarray*}
\frac{2^p}{(2r_{n}+1)^p(2r_{n}+3)^p}
&\leq& \frac{2^p}{(2(m-n)+1)^p(2(m-n)+3)^p}< \frac{2^p}{(2(m-n-1)+1)^p(2(m-n-1)+3)^p}.
\end{eqnarray*}
\\
%\noindent
%\textbf{Subcase 2.2:} $n\in X^{+}$. Now we have that the contribution of $f(p)$ to $A_{r_{n+1}}f(n+1)-A_{r_{n+1}+1}f(n)$ is $0$ (if $p>n+1+r_{n+1}$) or $\frac{1}{2r_{n+1}+1}-\frac{1}{2r_{n+1}+3}$ (if $n+1+r_{n+1}\geq p$), and in the second case we have that the contribution of $f(p)$ to $A_{r_{n+1}}f(n+1)-A_{r_{n+1}+1}f(n)$ satisfies
%\begin{eqnarray*}
%\frac{1}{2r_{n+1}+1}-\frac{1}{2r_{n+1}+3}&=&\frac{2}{(2r_{n+1}+1)(2r_{n+1}+3)}\\
%&\leq& \frac{2}{(2(p-n-1)+1)(2(p-n-1)+3)}\\
%&=&\frac{1}{2(p-n-1)+1}-\frac{1}{2(p-n-1)+3}.
%\end{eqnarray*} 
%The equality is achieved if and only if $r_{n+1}=p-n-1$.\\

%Therefore the contribution of $f(m)$ to the RHS of \eqref{suma} is bounded by
A similar analysis can be done with the second term of \eqref{suma}, in fact, for a fixed $n\in X^+$ we start observing that
\begin{align*}\label{term 2}
(Mf(n+1)-Mf(n))^p&\leq
(A_{r_{n+1}}f(n+1)-A_{r_{n+1}}f(n))^p\\
&\leq \left|\frac{2}{(2r_{n+1}+1)(2r_{n+1}+3)}\sum_{k=n+1-r_{n+1}}^{n+1+r_{n+1}}f(k)\right|^p\\
&\leq \frac{2^p}{(2r_{n+1}+1)^p(2r_{n+1}+3)^p}\sum_{k=n+1-r_{n+1}}^{n+1+r_{n+1}}f(k)^p.
\end{align*}

Then, if $n\geq m$, the contribution of $f(m)^p$ to the previous expression is strictly smaller than 
$$
\frac{2^p}{(2(n-m)+1)^p(2(n-m)+3)^p}.
$$
Moreover, if $n< m$, the contribution of $f(m)^p$ is smaller than or equal to 
$$
\frac{2^p}{(2(m-n-1)+1)^p(2(m-n-1)+3)^p}.
$$

Therefore, from \eqref{suma} we conclude that
\begin{align*}
(\var_p Mf)^p&\leq \left[\sum_{ m=-\infty}^{n}\frac{2^p}{(2(n-m)+1)^p(2(n-m)+3)^p}+\sum_{m=n+1}^{+\infty}\frac{2^p}{(2(m-n-1)+1)^p(2(m-n-1)+3)^p}\right]\|f\|^p_p\\
&= 2\sum_{k=0}^{\infty}\frac{2^p}{(2k+1)^p(2k+3)^p}\|f\|^p_p.
\end{align*}
%As $p$ is arbitrary point in $\Z$ we established \eqref{main theo cent d=1}. 
We can easily see that if $f$ is a delta function then the previous inequality becomes an equality.
%$$
%\var Mf=2\|f\|_{\ell^{\infty}(\Z)}=2\|f\|_{\ell^{1}(\Z)},
%$$
On the other hand, for a function $f:\Z\to\R$ such that $(\var_p Mf)^p=2\sum_{k=0}^{\infty}\frac{2^p}{(2k+1)^p(2k+3)^p}\|f\|^p_p$ and $f\geq0$, we consider the set $P:=\{s\in\Z; f(s)\neq 0\}$, thus
$$
(\var_p Mf)^p=2\left(\sum_{k=0}^{\infty}\frac{2^p}{(2k+1)^p(2k+3)^p}\right)\sum_{t\in P}f(t)^p.
$$
Then, given $s_{1}\in P$, %the contribution of $f(s_{1})$ to \eqref{suma} is 2, therefore, 
by the previous analysis we note that for all $n\geq s_{1}$ we must have that $n\in X^{-}$ and $r_{n}=n-s_{1}$. If we take $s_{2}\in P$ the same has to be true, this implies that $s_{1}=s_{2}$, therefore $P=\{s_{1}\}$ which means that $f$ is a delta function. 

\end{proof}

Observe that for all $p\in(0,1]$
the following inequality holds
$$\var_pf \leq 2^{1/p}\|f\|_p$$
for any function $f:\Z\to\R$. This follows from the fact that
$|f(n)-f(n+1)|^p\leq |f(n)|^p+|f(n+1)|^p$ for all $n\in\Z$. Motivated by this trivial bound and our Theorem \ref{lim d=1 C=2} we pose the following question

\begin{conjecture}\label{quest: in z p<1}
Let $p\in(1/2,1]$ and $f:\Z\to\R$ be a function in $\ell^{p}(\Z).$ Then
\begin{equation}\label{conj ineq for p<1 in z}
\var_p Mf\leq \left(\sum_{k=0}^{\infty}\frac{2^{p}}{(2k+1)^p(2k+3)^p}\right)^{\frac{1}{p}}\var_pf.
\end{equation}
\end{conjecture}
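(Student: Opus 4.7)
My plan is to adapt the strategy of the proof of Theorem~\ref{lim d=1 C=2} but work at the level of first differences $g(k):=f(k+1)-f(k)$, since $\var_p f=\|g\|_p$. Since $M|f|=Mf$ and the triangle inequality yields $\var_p|f|\le\var_p f$ when $p\le 1$, we may assume $f\ge 0$. As in the proof of the theorem, for each $n\in\Z$ we select a radius $r_n\ge 0$ with $Mf(n)=A_{r_n}f(n)$, split $\Z$ into the sets $X^-$, $X^+$ where $Mf$ strictly decreases, resp.\ increases, from $n$ to $n+1$, and dominate $|Mf(n)-Mf(n+1)|\le A_{r_n}f(n)-A_{r_n+1}f(n+1)$ on $X^-$, with the symmetric bound on $X^+$.

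The new ingredient is an exact rewriting of each difference of consecutive averages as a convex combination of $g$'s. A short computation, starting from
$2\sum_{k=n-r}^{n+r}f(k)-(2r+1)\bigl(f(n+r+1)+f(n+r+2)\bigr)$ and telescoping, gives
\[
A_r f(n)-A_{r+1}f(n+1)\;=\;-\sum_{k=n-r}^{n+r+1}c_{r,\,k-n}\,g(k),
\]
with nonnegative weights $c_{r,j}=\tfrac{2(j+r+1)}{(2r+1)(2r+3)}$ for $|j|\le r$ and $c_{r,r+1}=\tfrac{1}{2r+3}$, satisfying $\sum_j c_{r,j}=1$. Applying the subadditivity $\bigl|\sum a_k\bigr|^p\le\sum|a_k|^p$ (valid for $p\in(0,1]$) on each $n$, summing, and swapping the order of summation (and treating $X^+$ by the mirror identity) one obtains
\[
(\var_p Mf)^p\;\le\;\sum_{m\in\Z}|g(m)|^p\,W(m),\qquad W(m):=\sum_{n\in X^-}c_{r_n,\,m-n}^{\,p}+\sum_{n\in X^+}c_{r_{n+1},\,m-n-1}^{\,p}.
\]
The conjecture reduces to the pointwise bound $W(m)\le C_p:=\sum_{k\ge 0}\tfrac{2^p}{(2k+1)^p(2k+3)^p}$ uniformly in $m$ and $f$; this would be saturated by $f=\delta_m$, in agreement with the equality case of Theorem~\ref{lim d=1 C=2}.

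The heart of the argument — and the main obstacle — is precisely this last estimate. A naive pointwise maximization of $c_{r_n,m-n}$ in $r_n$ is doomed: allowing each $r_n$ to be tuned independently gives coefficients of order $1/|m-n|$, whose $p$-th powers fail to be summable when $p\le 1$. A proof must therefore exploit the global fact that the $(r_n)$'s are the maximizing radii of \emph{one} function $f$, so that the competing inequalities $A_{r_n}f(n)\ge A_{r_{n\pm1}\pm1}f(n)$ couple consecutive choices; the target is a combinatorial/rearrangement argument that reduces the extremal configuration of $W(m)$ to the delta-function case. It must be emphasized, however, that at $p=1$ the sum $C_p$ telescopes to $1$, so the conjecture in that endpoint is exactly the longstanding open problem $\var_1 Mf\le \var_1 f$ discussed in the Introduction (where the best known constant \cite{temur2017regularity} is still far from~$1$). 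Any complete proof must therefore contain ideas sufficient to settle that problem; the subrange $p\in(\tfrac12,1)$ is presumably the more tractable regime, since the strict subadditivity $|\sum a_k|^p<\sum|a_k|^p$ there leaves genuine slack in the bounds above.
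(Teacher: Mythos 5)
This statement is posed in the paper as a \emph{conjecture}: the authors offer no proof of it, and they explicitly note that at the endpoint $p=1$ the right-hand side constant telescopes to $1$, so that \eqref{conj ineq for p<1 in z} becomes the long-standing open problem $\var Mf\le \var f$. Your proposal does not close this gap, and you candidly say so yourself. What you have written is a (correct, and rather clean) \emph{reduction}: the identity expressing $A_rf(n)-A_{r+1}f(n+1)$ as a convex combination $-\sum_j c_{r,j}\,g(n+j)$ of first differences checks out (the weights are nonnegative and sum to $1$, and the formula is verified, e.g., on delta functions), and the passage via $p$-subadditivity to the bound $(\var_pMf)^p\le\sum_m|g(m)|^pW(m)$ is sound. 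But the entire content of the conjecture is then concentrated in the unproved claim $W(m)\le C_p$, and your own discussion explains why the only tool available from the paper's Theorem~\ref{lim d=1 C=2} --- bounding each coefficient $c_{r_n,m-n}$ independently over $r_n$ --- necessarily fails here: the resulting coefficients decay like $1/|m-n|$, whose $p$-th powers are not summable for $p\le 1$. The ``global coupling'' of the maximizing radii that you invoke as the missing ingredient is precisely the idea nobody has; no candidate lemma or mechanism for it is supplied.

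So the verdict is: genuine gap, and an unavoidable one --- the statement is open, and a complete argument along your lines would in particular settle the $\var Mf\le\var f$ problem at $p=1$. Two smaller points worth recording. First, your reduction quietly replaces $\var_p f$ by $\var_p|f|$ via the inequality $\var_p|f|\le\var_p f$; that step is fine, but it means any sharpness discussion must be carried out for nonnegative $f$. Second, the claim that $W(m)\le C_p$ ``would be saturated by $f=\delta_m$'' is consistent with the equality case of Theorem~\ref{lim d=1 C=2}, but note that for $\delta_m$ one has $\var_p\delta_m=2^{1/p}\|\delta_m\|_p$, so the delta function does \emph{not} saturate \eqref{conj ineq for p<1 in z} itself --- it only shows the constant cannot be improved below $\tfrac12\cdot 2\sum_k\frac{2^p}{(2k+1)^p(2k+3)^p}$ up to the factor coming from $\var_p\delta_m$ versus $\|\delta_m\|_p$; identifying the true extremizers for the variation-to-variation inequality is itself part of the open problem.
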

In general, it would be interesting to answer the following question:\\
{\it{Question B:}} Let $p\in(0,\infty]$. What is the smallest constant $C_p$ such that
$$
\|(Mf)'\|_p=\var_pMf\leq C_p\var f=\|f'\|_p,
$$
for all $f:\Z\to\R$.\\

We note that for $p=\infty$ we have that $C_{\infty}=1$. The upper bound $C_{\infty}\leq 1$ trivially holds, on the other hand to see that the lower bound $C_{\infty}\geq 1$ holds it is enough to consider the function $f:\Z\to\R$ defined by $f(n)=\max\{10-|n|,0\}$. Moreover, observe that for $p\leq 1/2$ the right hand side of \eqref{conj ineq for p<1 in z} is $+\infty$ for any no constant function, so the inequality \eqref{conj ineq for p<1 in z} trivially holds in that case. However, this is highly not trivial for $p\in (1/2,1]$.
If true, this results would be stronger than our Theorem \ref{lim d=1 C=2}. For $p>1$ even the analogous result to our Theorem \ref{lim d=1 C=2} remains open.

For $p=1$, it was proved by Kurka \cite{Kurka2010} that 
$$
\var Mf\leq 240,004\var f,
$$
where $\var_1=\var$.
In this case Conjecture \ref{quest: in z p<1} simplifies to 
$$
\var Mf\leq \var f,
$$
which is believe to be true. This problem, innocent at first sight, has shown to be very challenging, and remains open. Any significant progress  reducing the constants towards the conjectural bounds would be very interesting. This problem has also been considered for the uncentered Hardy-Littlewood maximal operator $\widetilde M$, in \cite{BCHP2012} Bober, Carneiro, Hughes and Pierce proved that for all $f:\Z\to\R$ the following optimal inequality holds
$$
\var \widetilde Mf\leq \var f.
$$
Then, also would be interesting to answer the following question:\\
{\it{Question C:}}  Let $p\in(0,\infty]$. What is the smallest constant $\widetilde C_p$ such that
$$
\|(\widetilde Mf)'\|_p=\var_p\widetilde Mf\leq\widetilde C_p\var f=\widetilde C_p\|f'\|_p,
$$
for all $f:\Z\to\R$. Our next theorem gives an answer to this question for $p=\infty$. An auxiliary tool is the following lemma. %(see Lemma 6 in \cite{CaMa}).

\begin{lemma}\label{lemma infty}
Let $f:\Z \to \R^+$ be a function such that $\|f'\|_{\infty}<\infty$ and $\wt{M}_{}f \not\equiv \infty$. 
Then, we have $\wt{M}_{}f(n) <\infty$ for all $n \in \Z$.
\end{lemma}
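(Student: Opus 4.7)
The plan is to prove the contrapositive: given that $\wt{M}f(n_0)=:C<\infty$ at some fixed $n_0\in\Z$, I will show $\wt{M}f(n)<\infty$ at every other $n\in\Z$. Fix such an $n$, write $L:=\|f'\|_{\infty}$ and $d:=|n-n_0|$, and note that $f(n)\in\R$ is finite by hypothesis. The strategy is to bound the average $A_{[a,b]}:=(b-a+1)^{-1}\sum_{k=a}^{b}f(k)$ uniformly over all integer intervals $[a,b]\ni n$, splitting into cases according to the position of $n_0$ relative to $[a,b]$ and the length $b-a$.

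The easy case is when $n_0\in[a,b]$: the interval is then admissible for the uncentered supremum at $n_0$, so $A_{[a,b]}\leq C$. In the complementary case, say $n_0<a$ (the case $n_0>b$ being symmetric), I extend leftward to $[n_0,b]\ni n_0$ and use the nonnegativity of $f$ to write
\[ A_{[a,b]} \;\leq\; \frac{C(b-n_0+1)}{b-a+1} \;=\; C\Bigl(1+\frac{a-n_0}{b-a+1}\Bigr). \]
Since $n_0<a\leq n$ we have $a-n_0\leq d$, so whenever $b-a\geq d$ this gives $A_{[a,b]}\leq 2C$.

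The remaining regime is $b-a<d$ with $n_0\notin[a,b]$, and here the Lipschitz hypothesis takes over: every $k\in[a,b]$ satisfies $|k-n|\leq b-a<d$, hence $f(k)\leq f(n)+Ld$ and $A_{[a,b]}\leq f(n)+Ld$. Combining the three regimes yields the uniform bound $\wt{M}f(n)\leq\max\{2C,\,f(n)+Ld\}<\infty$. I do not expect a serious obstacle; the only subtlety is the interplay between the two bounds, where the extension-to-$n_0$ estimate degrades precisely when the interval is shorter than $d$, and this is exactly the regime in which Lipschitz control of $f$ becomes useful.
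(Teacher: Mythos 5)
Your argument is correct, but it takes a different route from the paper's. The paper argues by contradiction in one line: if $\wt{M}f(n)=\infty$, pick windows $[n-s_j,n+r_j]$ with $A_{r_j,s_j}f(n)\to\infty$ and translate each window to any other point $m$; the Lipschitz hypothesis gives $A_{r_j,s_j}f(m)\geq A_{r_j,s_j}f(n)-\|f'\|_{\infty}|m-n|$, so $\wt{M}f(m)=\infty$ for every $m$, contradicting $\wt{M}f\not\equiv\infty$. You instead fix a point $n_0$ of finiteness and bound every admissible interval at $n$ by a three-way case analysis (interval contains $n_0$; interval is long and can be extended to $n_0$ using $f\geq 0$; interval is short and controlled by the Lipschitz bound), arriving at $\wt{M}f(n)\leq\max\{2C,\,f(n)+Ld\}$. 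Both proofs are sound. The paper's window-translation is shorter, uses the Lipschitz hypothesis only once, does not need nonnegativity of $f$, and yields the cleaner quantitative bound $\wt{M}f(n)\leq \wt{M}f(n_0)+\|f'\|_{\infty}|n-n_0|$; your decomposition is a bit longer and leans on $f\geq 0$ in the middle case, but it is a direct proof of finiteness that makes explicit which geometric regimes of intervals are controlled by which hypothesis. One cosmetic remark: what you prove is the implication of the lemma itself (finiteness at one point propagates everywhere), not its contrapositive, so you may want to drop that word.
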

\begin{proof}
Assume that there is $n \in \Z$ such that $\wt{M}_{}f(n) =\infty$, then, there exists a sequence $\{r_j, s_j\}$ in $\Z^+ \times \Z^+$, with $r_j + s_j \to \infty$ such that $A_{r_j,s_j}f(n) \to \infty$ as $j \to \infty$. For any $m \in \Z$, defining $C = \|f'\|_{\infty}$ we have
\begin{equation}\label{Sec4_lem6_eq2}
A_{r_j,s_j}f(m) \geq A_{r_j,s_j}f(n) - C|m-n|\,,
\end{equation}
therefore $\wt{M}_{\beta}f(m) =\infty$, a contradiction.
\end{proof}

\begin{theorem}\label{p=infty}
For all $f:\mathbb{Z}\to\mathbb{R}$ such that $\wt{M}_{}f \not\equiv \infty$, we have that
$$
\|(\widetilde Mf)'\|_{\infty}\leq \frac{1}{2}\|f'\|_{\infty}.
$$
Moreover, the equality is attained if $f$ is a delta function. 
\end{theorem}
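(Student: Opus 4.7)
The plan is to exploit the asymmetric shift structure of uncentered discrete averages. I first reduce to $f \geq 0$ (using $\wt{M}f = \wt{M}|f|$ and $\||f|'\|_\infty \leq \|f'\|_\infty$) and assume $C := \|f'\|_\infty < \infty$, so that by Lemma \ref{lemma infty} the function $\wt{M}f$ is finite at every point of $\Z$. Fix $n \in \Z$; the goal is to bound both $\wt{M}f(n+1)-\wt{M}f(n)$ and $\wt{M}f(n)-\wt{M}f(n+1)$ by $C/2$.

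The driving algebraic fact is the shift identity $A_{r-1,s+1}f(n) = A_{r,s}f(n+1)$ whenever $r \geq 1$, which simply says that the interval $[n-r+1,\,n+s+1]$ can be viewed either as the $(r-1,s+1)$-ball around $n$ or as the $(r,s)$-ball around $n+1$. Given $\epsilon > 0$, pick $(r,s)$ with $A_{r,s}f(n+1) \geq \wt{M}f(n+1) - \epsilon$. When $r \geq 1$ the shift identity immediately gives $\wt{M}f(n) \geq A_{r,s}f(n+1)$, so that regime contributes nothing. The only nontrivial situation is the boundary $r=0$, where the optimizing interval $[n+1,n+s+1]$ lies entirely to the right of $n$. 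Here I compare $A_{0,s}f(n+1)$ with its natural competitor $A_{0,s+1}f(n)$; a short calculation produces
$$A_{0,s}f(n+1) - A_{0,s+1}f(n) = \frac{1}{(s+1)(s+2)}\sum_{j=1}^{s+1}\bigl(f(n+j)-f(n)\bigr),$$
and the telescoping bound $|f(n+j)-f(n)| \leq jC$ together with $\sum_{j=1}^{s+1} j = (s+1)(s+2)/2$ yields exactly $C/2$ as an upper bound. Sending $\epsilon \downarrow 0$ gives $\wt{M}f(n+1)-\wt{M}f(n) \leq C/2$, and the reverse inequality follows by the mirror argument (using $A_{r+1,s-1}f(n+1) = A_{r,s}f(n)$ when $s \geq 1$, and comparing $A_{r,0}f(n)$ with $A_{r+1,0}f(n+1)$ when $s=0$).

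For the equality statement, the delta $f = \delta_{0}$ has $\|f'\|_\infty = 1$ and $\wt{M}\delta_{0}(n) = 1/(|n|+1)$, so the jump between $n=0$ and $n=1$ equals $1/2$, matching the constant. The main subtlety to guard against is that the supremum defining $\wt{M}f$ need not be attained, but this is handled cleanly by the $\epsilon$-approximation scheme. The only real technical point is identifying the correct boundary competitor $A_{0,s+1}f(n)$ (and its mirror $A_{r+1,0}f(n+1)$); the sharp constant $1/2$ emerges from the exact cancellation between the arithmetic series $\sum_{j=1}^{s+1} j$ and the denominator $(s+1)(s+2)$ of the average difference, so any other natural competitor would lose a factor and spoil sharpness.
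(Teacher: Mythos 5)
Your argument is correct and is essentially the paper's own proof: both reduce to $f\ge 0$, take a near-optimal interval for the point with the larger maximal value, observe that if this interval already reaches the neighboring point the difference is at most $\varepsilon$, and otherwise extend the one-sided interval by a single point toward the neighbor, at which point the difference of averages telescopes to $\frac{1}{(s+1)(s+2)}\sum_{j=1}^{s+1}(f(\cdot+j)-f(\cdot))$ and the bound $|f(n+j)-f(n)|\le j\|f'\|_\infty$ gives exactly $\tfrac12\|f'\|_\infty$. Your explicit computation of $\wt{M}\delta_0(n)=1/(|n|+1)$ for the sharpness claim is a welcome addition the paper only asserts.
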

\begin{remark}
This theorem is a discrete analogue of the main result obtained in \cite{Aldaz2010} on the continuous setting, in that case the optimal constant is $2^{1/2}-1$. We use an elementary combinatorial argument to establish our result, this technique is completely independent of those in \cite{Aldaz2010}. 
\end{remark}

\begin{proof}%[Proof of Theorem \ref{p=\infty}]

We assume without loss of generality that $f$ is nonnegative. Let $n\in\Z$, by Lemma \ref{lemma infty} we have that $\widetilde Mf(n)<\infty$, then, for all $\varepsilon>0$ there are $r_{n,\varepsilon},s_{n,\varepsilon}\geq0$ such that
\begin{equation}\label{n epsilon}
\widetilde Mf(n)<\frac{1}{r_{n,\epsilon}+s_{n,\varepsilon}+1}\sum_{k=-s_{n,\varepsilon}}^{r_{n,\varepsilon}}f(n+k)+\varepsilon.
\end{equation}
%otherwise we proceed via a limit argument.\\
We analyze two cases, the argument works similarly for both situations.
{\it{Case 1:}} $ (\widetilde Mf)'(n)>0$. 
In this case we star observing that $r_{n,\varepsilon}=0$ for all sufficiently small $\varepsilon$ (otherwise, from \eqref{n epsilon} we would obtain $\widetilde Mf(n)\leq \widetilde Mf(n+1)$). Then, for all sufficiently small $\varepsilon$
we have that
\begin{align*}
    \widetilde Mf(n)-\widetilde Mf(n+1)&\leq 
    \frac{1}{s_{n,\varepsilon}+1}\sum_{k=-s_{n,\varepsilon}}^{0}f(n+k)+\varepsilon-\frac{1}{s_{n,\varepsilon}+2}\sum_{k=-s_{n,\varepsilon}-1}^{0}f(n+1+k)\\
    &\leq \left(\frac{1}{s_{n,\varepsilon}+1}-\frac{1}{s_{n,\varepsilon}+2}\right)\sum_{k=-s_{n,\varepsilon}}^{0}f(n+k)-\frac{1}{s_{n,\varepsilon}+2}f(n+1)+\varepsilon\\
    &=\frac{1}{(s_{n,\varepsilon}+2)(s_{n,\varepsilon}+1)}\sum_{k=-s_{n,\varepsilon}}^{0}(f(n+k)-f(n+1))+\varepsilon\\
    &\leq \frac{1}{(s_{n,\varepsilon}+2)(s_{n,\varepsilon}+1)}\sum_{k=1}^{s_{n,\varepsilon}+1}k\|f'\|_{\infty}+\varepsilon\\
    &= \frac{1}{(s_{n,\varepsilon}+2)(s_{n,\varepsilon}+1)}\frac{(s_{n,\varepsilon}+1)(s_{n,\varepsilon}+2)}{2}\|f'\|_{\infty}+\varepsilon\\
    &= \frac{1}{2}\|f'\|_{\infty}+\varepsilon.
\end{align*}
Since this holds for any arbitrary $\varepsilon$, sending $\varepsilon$ to $0$ we conclude that
$$
\widetilde Mf(n)-\widetilde Mf(n+1)\leq \frac{1}{2}\|f'\|_{\infty}.
$$
{\it{Case 2:}} $(\widetilde Mf)'(n)<0$.  This case follows anlogously. Since these are the only two possible cases
%In this case, similarly, we obtain that
%$$
%\widetilde Mf(n+1)-\widetilde Mf(n)\leq\frac{1}{2}\|f'\|_{\infty}. 
%$$
the result follows.
\end{proof}

\bibliography{Reference}
\bibliographystyle{amsplain}

\end{document}